\definecolor{ao(english)}{rgb}{0.0, 0.5, 0.0}
\newtheorem{theorem}{Theorem}
\newtheorem{lemma}[theorem]{Lemma}
\newtheorem{corollary}[theorem]{Corollary}
\theoremstyle{definition}		
\title{On unknotting tunnel systems of satellite chain links}
\author[D. Gir\~ao, J. M. Nogueira and A. Salgueiro]{Darlan Gir\~ao, Jo\~ao M. Nogueira and Ant\'onio Salgueiro}
\begin{document}

\begin{abstract} 
We prove that the tunnel number of a satellite chain link with a number of components higher than or equal to twice the bridge number of the companion is as small as possible among links with the same number of components. We prove this result to be sharp for satellite chain links over a 2-bridge knot. 
\end{abstract}

\maketitle

\dedicatory{ }

\section{Introduction}\label{intro}

 An unknotting tunnel system for a link $L$ in $S^3$ is a collection of properly embedded disjoint arcs $\{t_1, \ldots, t_n\}$ in the exterior of $L$, such that the exterior of $L\cup t_1\cup \cdots \cup t_n$ is a handlebody. The minimal cardinality of an unknotting tunnel system of $L$ is the \textit{tunnel number} of $L$, denoted $t(L)$.\\

The boundary surface of this handlebody defines a Heegaard decomposition of $E(L)$. We recall that a {\em Heegaard decomposition} of  a compact 3-manifold  $M$ is a decomposition of $M$ into two compression bodies $H_1$ and $H_2$ along a surface $F$, which we refer to as \textit{Heegaard surface}. For the context of this paper, we define the \textit{Heegaard genus} of $M$, denoted by $g(M)$, as the minimal genus of a Heegaard surface of $M$ over all Heegaard decompositions of $M$ into a handlebody and a compression body. If $M$ is an exterior of some link $L$ in $S^3$, $E(L)$, we also have $t(L)=g(E(L))-1$. Note that when $M$ is closed or has connected boundary, any Heegaard decomposition of $M$ consists of at least one handlebody; so, in this case, the Heegaard genus of $M$ is the minimal Heegaard surface genus among all Heegaard decompositions of $M$. However, if the boundary of $M$ has more than one component, as a link exterior can have, then a Heegaard decomposition of $M$ might not decompose $M$ into a handlebody and a compression body; so, in this case, the Heegaard genus of $M$, as defined above, might not be the minimal Heegaard surface genus among all Heegaard decompositions of $M$.\\

If one of the compression bodies of a Heegaard decomposition of genus $g$ is a handlebody, we can naturally present the fundamental group $\pi_1(M)$ with $g$ generators: the core of the handlebody defines $g$ generators, and the compressing disks of the compression body give a set of relators. In this case, the rank $r(M)$ of $\pi_1(M)$, referred to as the rank of $M$, which is the minimal number of elements needed to generate $\pi_1(M)$, is at most $g$. Hence, we have $r(M)\leq g(M)$.\\
 
Under this setting, Waldhausen \cite{Wa} asked whether $r(M)$ can be realized geometrically as the genus of a Heegaard decomposition splitting $M$ into one handlebody and a compression body, that is if $r(M)=g(M)$, for every compact 3-manifold $M$. This question became to be known as the rank versus genus conjecture. In \cite{BZ}, Boileau--Zieschang provided the first counterexamples by showing that there are Seifert manifolds where the rank is strictly smaller than the Heegaard genus. Later, Schultens and Weidman \cite{SW} generalized these counterexam\-ples to graph manifolds. Recently, Li \cite{Li} proved that the conjecture also doesn't hold true for hyperbolic $3$-manifolds. As far as we  know, the conjecture remains open for link exteriors in $S^3$. The first author \cite{Gi} proved this conjecture to be true for augmented links. In this paper, we show that this is also the case for ``most'' of \textit{chain links}, which we proceed to define.   

\medskip

A \textit{satellite $n$-chain link} is a link $L$ defined by a sequence of $n\geq 2$ unknotted linked components  where each component bounds a disk such that each of these disks $D$ intersects the previous and the following disk at exactly two arcs, each of which with only one end point in $\partial D$. Note that if two such disks $D$ and $D'$ intersect at an arc, then this arc has one end point in $\partial D$ and the other end point in $\partial D'$. We denote a collection of such disks by $\mathcal{D}$. A regular neighborhood  of $\mathcal{D}$ is a regular neighborhood of a non-trivial knot $K$. We also refer to $L$ as an \textit{$n$-chain link over $K$} and $K$ as its {\em companion}.
When $K$ is the unknot, $L$ is known in the literature simply as an \textit{$n$-chain link} \cite{NR,Ag,KPR}. When $K$ is a non-trivial knot, $L$ is a satellite link with companion $K$ and pattern an $n$-chain link (over the unknot).\\

\begin{figure}[ht]
\includegraphics[scale=.07]{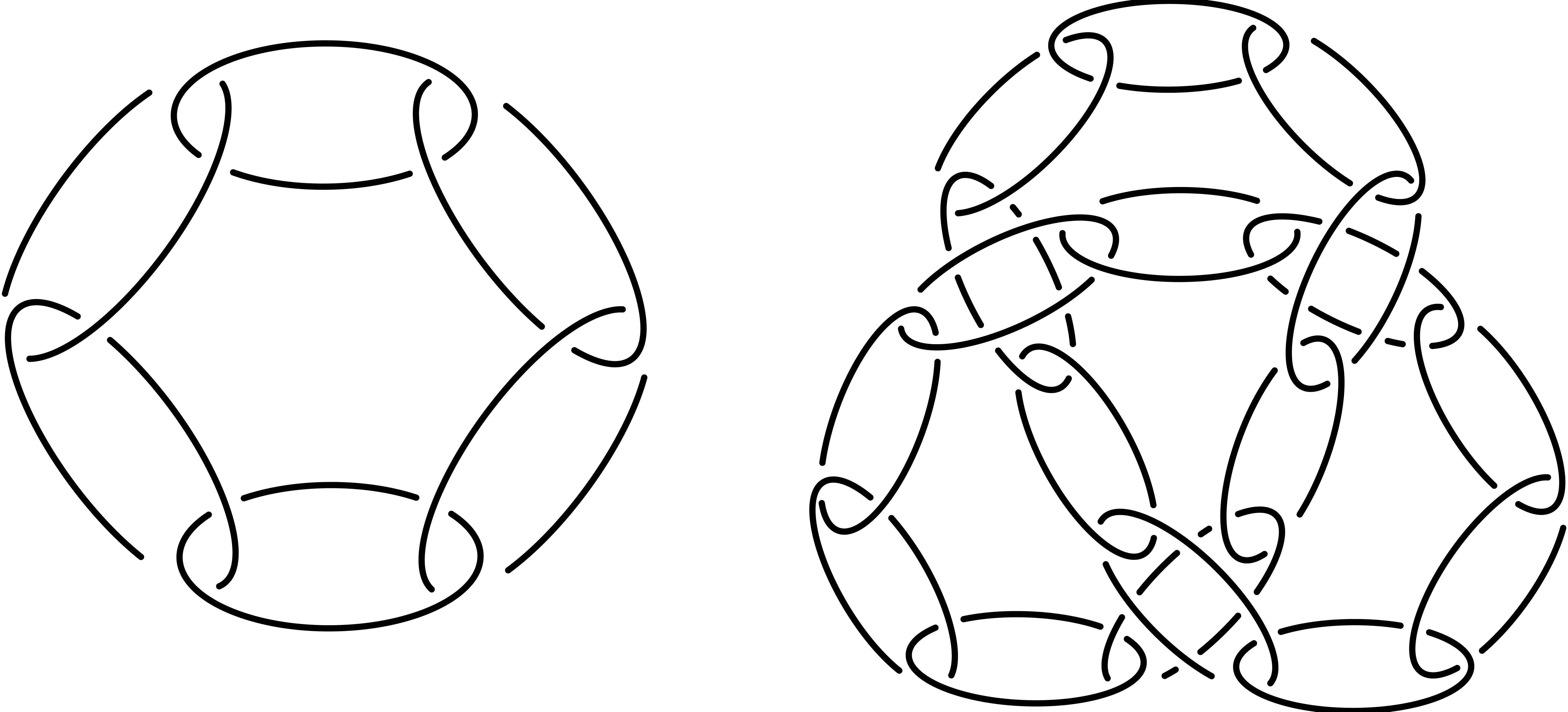}
\caption{Left: a chain link over the unknot; Right: a chain link over the trefoil.}
\label{chains}
\end{figure}

The $n$-chain links over the unknot have been subject of attention for the study of hyperbolic structures. For instance, Neumann and Reid \cite{NR} showed that, for $n\geq 5$, the complement of an $n$-chain link over the unknot admits a hyperbolic structure. Agol \cite{Ag} conjectures that, for $n\leq 10$, an $n$-chain over the unknot is the smallest volume hyperbolic $3$-manifold with $n$ cusps. In \cite{KPR} Kaiser, Purcell and Rollins proved that, for $n\geq 60$, an $n$-chain over the unknot cannot be the smallest volume hyperbolic $3$-manifold with $n$ cusps.\\ 

In this paper we study unknotting tunnel systems of satellite chain links $L$.  We know that if the companion of $L$ is the unknot then the Heegaard genus of $E(L)$ is $n$ (and equal to its rank). In the following theorem we prove that this is also the case for satellite chain links with non-trivial companion as long as the number of components of the link is sufficiently large. We refer to a knot with bridge number $b$ as a $b$-bridge knot. 

\begin{theorem}\label{thm1}
Let $L$ be a $n$-chain link over a $b$-bridge knot $K$.
 If $n\geq 2b$, then the tunnel number of $L$ is $n-1$.
\end{theorem}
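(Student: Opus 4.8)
The plan is to prove $t(L)=n-1$ via the two inequalities. The lower bound is homological: $L$ has $n$ components, so $H_1(E(L))\cong\mathbb Z^n$ by Alexander duality, hence $r(E(L))\ge n$; since $r(M)\le g(M)$ and $t(L)=g(E(L))-1$, we get $t(L)\ge n-1$. The content is therefore to exhibit an unknotting tunnel system of cardinality $n-1$, equivalently a genus-$n$ Heegaard splitting of $E(L)$ with handlebody side.

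For the construction I would put $K$ in a bridge position realizing its bridge number $b$, writing $S^3=B_1\cup_S B_2$ with $(B_i,K\cap B_i)$ a trivial $b$-string tangle. Then $S$ cuts the solid torus $N(K)$ into $2b$ balls $W_1,\dots,W_{2b}$, alternately in $B_1$ and $B_2$, each a regular neighbourhood of one of the $2b$ sub-arcs of $K$ determined by $S$. Since $n\ge 2b$, I would isotope $L$ together with the disk system $\mathcal D$ so that $S$ separates the chain ``between consecutive components'' at exactly $2b$ places, one on each sub-arc of $K$; then every $W_j$ contains a non-empty block of $m_j\ge 1$ consecutive components of $L$ (with $\sum_j m_j=n$), $L$ is disjoint from $S$, and $S$ meets $\mathcal D$ only in the arcs along which consecutive disks of $\mathcal D$ intersect.

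The tunnel system is assembled in two layers. Inside each block $W_j$, take the $m_j-1$ tunnels running along $\mathcal D$ joining its $m_j$ consecutive components, which accounts for $\sum_j(m_j-1)=n-2b$ of them; then add $2b-1$ further tunnels, each crossing $S$, joining the $2b$ blocks into a connected graph $\Gamma=L\cup t_1\cup\dots\cup t_{n-1}$. As $\Gamma$ has first Betti number $n$, its regular neighbourhood is a genus-$n$ handlebody, so it only remains to show $E(\Gamma)=S^3\setminus N(\Gamma)$ is a handlebody. I would prove this by cutting $E(\Gamma)$ along $S$ and exhibiting a complete system of meridian disks built from: meridian disks of the trivial-tangle exteriors $B_i\setminus N(K)$, disks dual to the $n-2b$ ``local'' tunnels, and disks dual to the $2b-1$ ``crossing'' tunnels; matching these up along the planar surface $S\setminus N(\Gamma)$ then identifies $E(\Gamma)$ as a handlebody.

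The main obstacle is precisely this last step. Its delicacy is that neither half $B_i\setminus N(\Gamma)$ is a handlebody by itself --- the chain contains Hopf-linked components --- so the argument has to be global, and one must choose the $2b-1$ crossing tunnels together with the initial isotopy of $L$ very carefully so that the two halves fit together along $S$ without creating an essential surface. This is exactly where $n\ge 2b$ is used: one needs a chain component sitting in each of the $2b$ balls $W_j$, so that the bridge-tangle structure of $K$ on each side of $S$ --- and, in effect, the $b-1$ tunnels one would otherwise spend to certify $t(K)\le b-1$ --- can be absorbed by the chain at no extra cost. An equivalent but more streamlined packaging goes through amalgamation of Heegaard splittings: writing $E(L)=E(K)\cup_T X$ with $X$ the exterior of the chain pattern in the solid torus, one amalgamates the genus-$b$ bridge splitting of $E(K)$ with a Heegaard splitting of $X$ adapted to the $2b$ blocks, and shows that the $b-1$ spine arcs of the $E(K)$ side can be isotoped into and absorbed by the chain components occupying the $2b$ balls, so that the amalgamated genus equals $n$ rather than the $n+b-1$ from the naive bound.
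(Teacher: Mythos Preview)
Your lower bound is fine and is exactly the one used in the paper. The real task is the upper bound, and here your proposal has a genuine gap together with a geometric slip that hints at why the gap is hard to close along the lines you suggest.

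First the slip: you cannot arrange $L$ to be disjoint from the bridge sphere $S$. Each meridional disk of $R=N(K)$ that you place ``between consecutive components'' must meet $L$, because $L_i$ and $L_{i+1}$ form a Hopf link inside the ball they sit in, and a properly embedded separating disk in that ball necessarily intersects a Hopf link. So $S\cap L$ consists of $4b$ points, not zero, and there is no clean partition of the components of $L$ into the $2b$ balls $W_j$; some components are cut by $S$. This is not fatal, but it means your description of the tunnel system and of $B_i\setminus N(\Gamma)$ needs to be reworked.

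The substantive gap is the one you yourself flag: with your symmetric placement of tunnels, neither half $B_i\setminus N(\Gamma)$ is a handlebody, and you give no mechanism for showing that the two non-handlebody halves glue to a handlebody along the planar surface $S\setminus N(\Gamma)$. ``Matching meridian disks'' across $S$ is not a proof; one needs a concrete isotopy or disk system, and the Hopf-linking in each block is precisely what obstructs the obvious candidates. The amalgamation paragraph does not help: amalgamating the genus-$b$ splitting of $E(K)$ with a splitting of the pattern exterior gives, a priori, genus $n+b-1$, and asserting that $b-1$ arcs ``can be absorbed'' is exactly the statement to be proved.

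The paper avoids this difficulty by an \emph{asymmetric} choice that trivializes one side completely. Writing $S^3=B\cup_S B'$, it isotopes $\mathcal D$ so that each of the $b$ components of $B\cap R$ contains exactly one arc of $\mathcal A$, while the remaining $n-b$ arcs of $\mathcal A$ all lie in $B'\cap R$. The first $n-b$ tunnels are taken along these $n-b$ arcs in $B'$; after an isotopy, $N(L\cup t_1\cup\cdots\cup t_{n-b})\cap B'$ becomes a regular neighbourhood of the $b$ trivial arcs $K\cap B'$ together with $n-2b$ arcs parallel to them. Because $(B';K\cap B')$ is a trivial $b$-string tangle, a further $b-1$ tunnels in $B'$ (the standard unknotting tunnels of the trivial tangle) make this neighbourhood isotopic to all of $B'$ with $n-2b$ trivial $1$-handles attached. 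The exterior of the full graph is therefore the exterior in $B$ of $(B\cap L)$ together with $n-2b$ trivial arcs, and since the extra arcs are trivial this reduces to showing that $B\setminus N(B\cap L)$ is a handlebody. That last step is elementary: the bridge disks for $K\cap B$ thicken to disjoint balls $B_R^i$ each containing a trivial tangle, and the exterior is assembled by gluing handlebodies along disks.

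The point you are missing, then, is not a lemma but a choice: put all the tunnels on one side of $S$ so that that side is swallowed entirely, and the handlebody verification becomes a one-sided computation using the bridge disks of $K$. Your symmetric scheme forces a two-sided matching argument that you have not supplied and that would be considerably harder to carry out.
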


An immediate consequence of this theorem is that the rank versus genus conjecture holds true for chain links with sufficiently high number of components: 
Let $L$ be an $n$-chain link over a $b$-bridge knot $K$. If $n\geq 2b$, then $r(E(L))=g(E(L))$. In fact, from Theorem \ref{thm1}, we have $g(E(L))=n$, and from the  ``half lives, half dies" theorem  (\cite{Ha}, Lemma 3.5) applied to $E(L)$, we have $r\big(E(L)\big)\geq n$. Then 
	$n=|L|\leq r\big(E(L)\big)\leq  g\big(E(L)\big)=n$, and $r\big(E(L)\big)=g\big(E(L)\big)=n$.\\

We also prove the following theorem for satellite 3-chain links.

\begin{theorem}\label{thm2}
The tunnel number of a satellite 3-chain link is greater than or equal to 3.
\end{theorem}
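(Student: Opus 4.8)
The plan is to argue by contradiction, supposing $t(L)\le 2$. Since $L$ has three components, the compression body in any Heegaard splitting of $E(L)$ into a handlebody and a compression body has negative boundary equal to all of $\partial E(L)$, and assembling a connected surface from three tori by $1$-handles forces genus at least $3$; thus $t(L)\ge 2$ (this is the instance $|L|\le r(E(L))\le g(E(L))$ of the estimate used after Theorem~\ref{thm1}), so in fact $g(E(L))=3$. Fix a genus-$3$ splitting $E(L)=H\cup_S C$; then $C$ consists of three copies of $T^2\times I$ joined by exactly two $1$-handles.

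Next I bring in the companion torus. Write $V=N(K)$, $T=\partial V$, $E(K)=\overline{S^3\setminus V}$ and $E_P=\overline{V\setminus N(L)}$, so that $E(L)=E_P\cup_T E(K)$. I would first check that $T$ is essential in $E(L)$: incompressible towards $E(K)$ because $K$ is non-trivial, incompressible towards $E_P$ because the $3$-chain pattern is geometrically essential in $V$ (not contained in a ball, and no component a core of $V$), and not boundary-parallel because neither $E(K)$ nor $E_P$ is a product. Two genus estimates are worth recording: $g(E(K))=t(K)+1\ge 2$, and $g(E_P)\ge 4$, by the boundary-counting argument above applied to the four torus boundary components of $E_P$.

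Then I would study the position of a genus-$3$ Heegaard surface $S$ relative to $T$. As $L$ is non-split, $E(L)$ is irreducible, so a minimal-genus $S$ is irreducible, hence either weakly reducible or strongly irreducible. If $S$ is weakly reducible, untelescoping it (Casson--Gordon, Scharlemann--Thompson) yields a generalized Heegaard splitting whose incompressible thin level set may be taken to contain $T$; then $S$ is an amalgamation along $T$ of Heegaard splittings of $E_P$ and of $E(K)$, which together with the genus estimates should yield $g(S)\ge 4$, a contradiction. If $S$ is strongly irreducible, I would isotope it so that $S\cap T$ is either empty or a nonempty family of curves essential in both $S$ and $T$. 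In the empty case $T$ cannot lie in the handlebody $H$, and an incompressible torus in the compression body $C$ must be parallel to a component of $\partial_-C$, hence to a boundary torus of $E(L)$ --- contradicting essentiality of $T$. In the nonempty case, by the theory of strongly irreducible Heegaard surfaces relative to incompressible surfaces (Schultens), $S\cap E_P$ and $S\cap E(K)$ are each essential in their piece except that one of them is a strongly irreducible Heegaard surface of its piece; combining additivity of Euler characteristic with the genus estimates and the essentiality and boundary-slope data on each side should contradict $-\chi(S)=4$.

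The step I expect to be the main obstacle is this last one. A naive Euler-characteristic count only gives $-\chi(S)\ge 3$ or $\ge 4$, so closing the argument requires finer input: the precise topology of $E_P$ --- the exterior of the $3$-chain pattern in the solid torus, in particular its essential annuli and its four toral ends --- together with control of which slope the curves $S\cap T$ carry and boundary-incompressibility of $S\cap E(K)$, so as to upgrade the estimate to $-\chi(S)>4$. The weakly reducible case conceals the same difficulty, since the splitting of $E_P$ induced by untelescoping need not be into a handlebody and a compression body, and so need not have genus $\ge 4$.
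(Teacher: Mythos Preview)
Your approach is genuinely different from the paper's. You work with the companion torus $T$ and general Heegaard-splitting machinery (untelescoping, Schultens-type structure results), whereas the paper fixes instead a four-punctured \emph{sphere} $S$ meeting only one component $L_1$ of $L$ --- bounding a ball in which the two arcs of $L_1$ carry a non-trivial pattern --- and carries out a hands-on combinatorial analysis of how a meridian-disk system of the genus-$3$ handlebody meets $S$, using outermost-arc arguments and a Scharlemann-cycle contradiction. That route is uniform in $K$ and never needs the weak/strong dichotomy.

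You have identified the gaps in your outline yourself, and they are real. Two comments sharpen them.

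In the weakly reducible case, the genuine obstruction is not the genus bound on $E_P$ but the assertion that the thin level ``may be taken to contain $T$''. Untelescoping produces \emph{some} incompressible thin surface, not a prescribed one; if $K$ is a torus knot or a satellite, $E(L)$ has essential tori other than $T$, and nothing forces $T$ to appear. In fact, if the thin level \emph{is} $T$, your worry dissolves: the induced splitting of $E_P$ then has the three link tori in $\partial_-$ on one side and $T$ in $\partial_-$ on the other, so its genus is at least $3$, and with $g(E(K))\ge 2$ the amalgamated genus is already $\ge 3+2-1=4$. You do not need $g(E_P)\ge 4$; you need to pin down the thin torus.

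In the strongly irreducible case with $S\cap T\ne\emptyset$, the Euler-characteristic count really does not close. On the side where $S\cap E(K)$ is incompressible you only get a lower bound coming from the minimal $-\chi$ of essential surfaces in $E(K)$ with the given boundary slope on $T$, and there is no uniform positive bound over all non-trivial $K$: for a torus knot $E(K)$ is Seifert fibered and contains essential annuli, so $-\chi(S\cap E(K))$ can be $0$. The $E_P$ side is similarly delicate (the $3$-chain pattern exterior in a solid torus has its own essential annuli). Completing this route would require a separate classification of essential annuli and their boundary slopes on both sides, which is a substantial argument in its own right and not obviously shorter than the paper's.

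In short, the strategy is reasonable but, as you suspected, both branches need real additional input specific to the pattern and to $K$; the paper's choice of a four-punctured sphere rather than the companion torus is exactly what lets it avoid this.
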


Hence, for chain links over 2-bridge knots, Theorem \ref{thm1} is sharp:

\begin{corollary}\label{cor2}
If $L$ is a $n$-chain link over a 2-bridge knot $K$, then the tunnel number of $L$ is $n-1$ if and only if $n\geq 4$.
\end{corollary}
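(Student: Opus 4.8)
The plan is to combine Theorems \ref{thm1} and \ref{thm2} with one extra argument for the case $n=2$. A $2$-bridge knot is a $b$-bridge knot with $b=2$, so $2b=4$; hence if $n\ge 4$, Theorem \ref{thm1} applies directly and gives $t(L)=n-1$, which is the ``if'' direction.

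For the ``only if'' direction I would argue by contraposition: assuming $2\le n\le 3$, I want $t(L)>n-1$. As noted in the discussion following Theorem \ref{thm1}, the ``half lives, half dies'' theorem gives $g(E(L))\ge|L|=n$ for any $n$-chain link, so $t(L)=g(E(L))-1\ge n-1$ always; it therefore suffices to exclude equality, i.e.\ to prove $t(L)\ge n$ for $n\in\{2,3\}$. For $n=3$ this is exactly Theorem \ref{thm2}. So the only remaining case is $n=2$: I must show that a satellite $2$-chain link $L$ over a $2$-bridge knot $K$ has $t(L)\ge 2$.

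To treat $n=2$ I would suppose $t(L)=1$, so that $E(L)$ has a genus-two Heegaard surface $S$, and derive a contradiction using the essential companion torus $T=\partial N(K)$, which splits $E(L)$ as $E(K)\cup_T X$, where $X$ is the exterior of the $2$-chain link pattern in the companion solid torus $V$; note that $E(K)$ is not a solid torus (as $K$ is nontrivial) and $X$ is not a solid torus (as $\partial X$ has three components). Putting $S$ in efficient position with respect to $T$, the curves of $T\cap S$ become essential in both surfaces; if $T\cap S=\emptyset$ then $T$ lies in one of the two pieces of the splitting, which is impossible, since a handlebody contains no incompressible torus and in the genus-two compression body every incompressible torus is parallel to a negative boundary component, whereas $T$ is essential and not boundary-parallel in $E(L)$. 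If $T\cap S\ne\emptyset$, then $S$ cut along $T$ induces generalized Heegaard splittings of $E(K)$ and of $X$ whose complexities combine in a controlled way with the genus of $S$; using that $E(K)$ is small and has tunnel number one (as $K$ is $2$-bridge) and that $X$ is not a solid torus, one checks that the amalgamated genus must be at least $3$, contradicting $g(E(L))=2$. Hence $t(L)\ge 2>n-1$, and the corollary follows.

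I expect the last step, the case $n=2$, to be the main obstacle: the delicate point is making precise the lower bound for the genus of the amalgamation $E(K)\cup_T X$ in terms of the two pieces, and this is exactly where the hypothesis $b=2$ on the companion is used. An alternative would be to observe that each component of the $2$-chain link pattern has winding number $0$ in $V$, being the boundary of a disk contained in $V$, and to invoke known lower bounds for the tunnel number of satellites with winding-number-zero patterns over $2$-bridge companions.
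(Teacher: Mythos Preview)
Your overall structure is exactly the paper's: Theorem~\ref{thm1} handles $n\ge 4$, Theorem~\ref{thm2} handles $n=3$, and the case $n=2$ must be treated separately. The paper disposes of $n=2$ in one line by citing Eudave-Mu\~noz and Uchida~\cite{MU}, who classify all non-simple tunnel-number-one links; satellite $2$-chain links do not appear on that list. (The paper also remarks parenthetically that an argument parallel to the proof of Theorem~\ref{thm2}---using the essential four-punctured sphere inside $E(L)$---would work just as well.)

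Your independent treatment of $n=2$ is where the proposal is genuinely incomplete, and you correctly flag this yourself. The disjoint case $T\cap S=\emptyset$ is fine. The problem is the case $T\cap S\neq\emptyset$: the sentence ``one checks that the amalgamated genus must be at least $3$'' is not justified, and the standard amalgamation formula goes the wrong way---it gives an \emph{upper} bound $g(E(K)\cup_T X)\le g(E(K))+g(X)-1$, not a lower bound. Obtaining a usable lower bound for the genus of a toroidal amalgam requires real work (distance hypotheses, thin position, or a direct combinatorial analysis of how a genus-two surface can meet the essential torus), and carrying that out here is essentially what \cite{MU} does. So rather than sketching an ad hoc amalgamation argument, you should either invoke \cite{MU} as the paper does, or note that the four-punctured-sphere analysis of Theorem~\ref{thm2} adapts to the two-component case. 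Your alternative suggestion about winding number zero is also reasonable in spirit, but again it ultimately appeals to the same circle of results classifying tunnel-number-one satellites.
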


This is a consequence of Theorems  \ref{thm1} and \ref{thm2} and of satellite 2-chain links not having tunnel number one, as proved in \cite{MU} by Eudave-Mu\~noz and Uchida (or by following an argument as in the proof of Theorem \ref{thm2}). The authors wouldn't be surprised Theorem \ref{thm1} to be sharp for any bridge number of $K$.

\bigskip

This paper is organized into two sections, one for the proof of each theorem mentioned above. Throughout the paper we assume all manifolds to be in general position.

\section*{Acknowledgments}
The first author was partially supported by CNPq grants 446307/\linebreak 2014-9 and 306322/2015-3. The second and third authors were partially supported by the Centre for Mathematics of the 
University of Coimbra - UIDB/00324/2020, funded by the Portuguese Government through FCT/MCTES.

We also thank Mario Eudave-Mu\~noz for discussions on the subject of the paper.

\section*{Dedicatory}
While this paper was under preparation, Darlan Gir\~ao was diagnosed with cancer. After a prolonged courageous and dignifying battle with his condition, Darlan died before we could finish this work together. Darlan has been a very good friend and colleague, who we will miss. This paper is dedicated to his memory.

\section{Tunnel number of chain links with large number of components}
\label{section:proof_of_theorem}

In this section we prove Theorem \ref{thm1}.

Let $\mathcal{D}$ be a collection of disks as in the definition of satellite chain link and $\mathcal{A}$ the collection of arcs of intersection between the disks of $\mathcal{D}$. 
Let $R$ be a regular neighborhood of $\mathcal{D}$, such that $R$ is also a regular neighborhood of $K$.
Consider also a sphere defining a $b$-bridge decomposition for $K$, denoted by $S$, intersecting $R$ in a collection of meridional disks. Denote by $B$ and $B'$ the balls bounded by $S$ in $S^3$. 

Since $n\geq 2b$, we can perform an ambient isotopy so that each component of $B\cap R$ contains exactly one arc of $\mathcal{A}$, and each component of $B'\cap R$ contains at least one arc of $\mathcal{A}$. In the exterior of $L$, we start by adding $n-b$ tunnels to $N(L)$, denoted $t_1, \ldots, t_{n-b}$, corresponding to regular neighborhoods of the arcs of $\mathcal{A}$ in $B'\cap R$. (See Figure \ref{figure:balls}.) 

\begin{figure}[ht]
	\includegraphics[scale=.15]{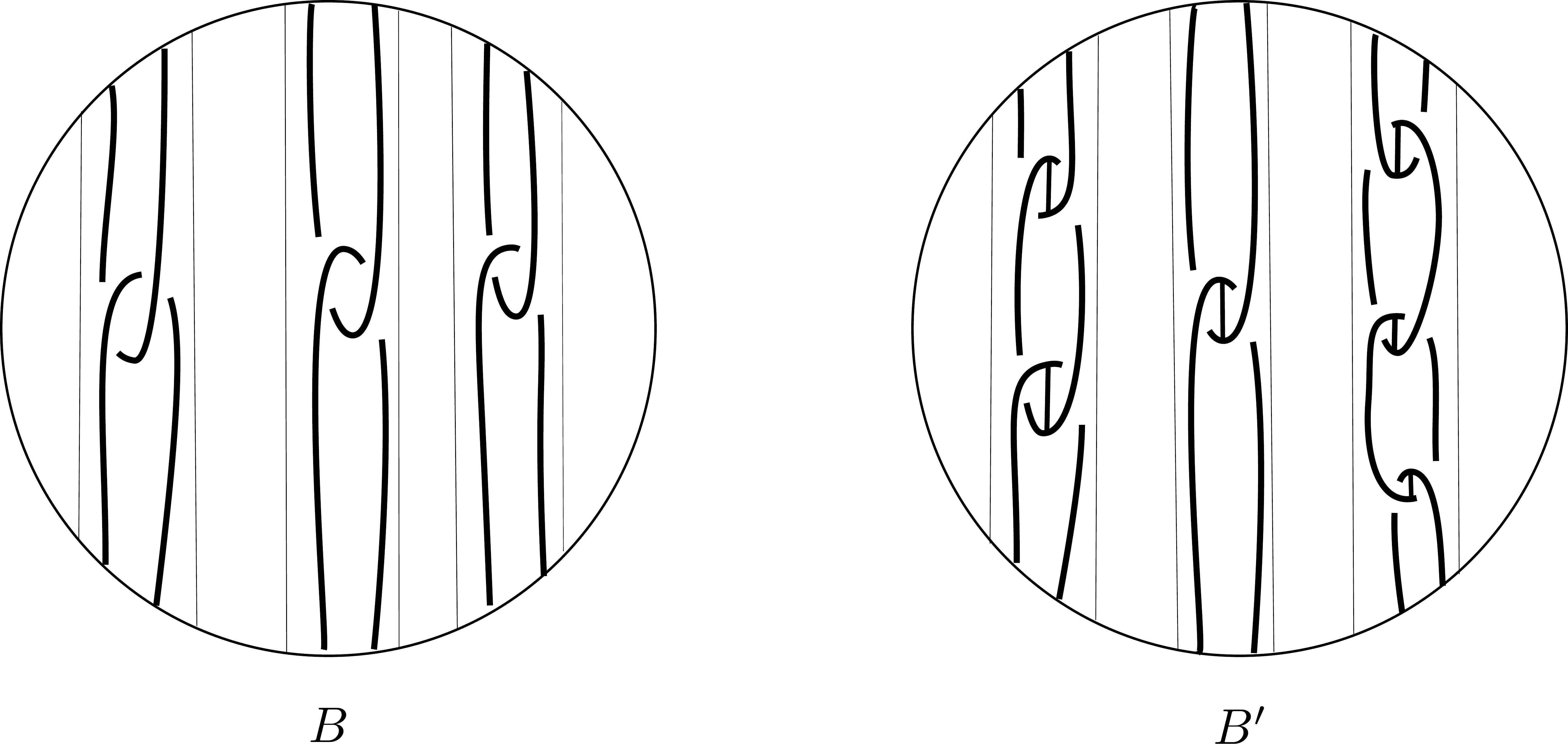}
	\caption{An illustration of $B$ and $B'$, with $n-b$ tunnels in $B'$.}
	\label{figure:balls}
\end{figure}

After an ambient isotopy of $N(L\cup t_1\cup\cdots\cup t_{n-b})$, we obtain in $B'$ a regular neighborhood $N(\Gamma)$ of  a graph $\Gamma$ obtained from the $b$ components of $K\cap B'$, denoted $c_1\cup \cdots\cup c_b$, by adding $n-2b$ arcs parallel to $K$. Note that after the isotopy, $S$ intersects $N(L\cup t_1\cup \cdots\cup t_{n-b})$ in $2b$ disks. (See Figure \ref{figure:graph2}.)

\begin{figure}[ht]
	\includegraphics[scale=.15]{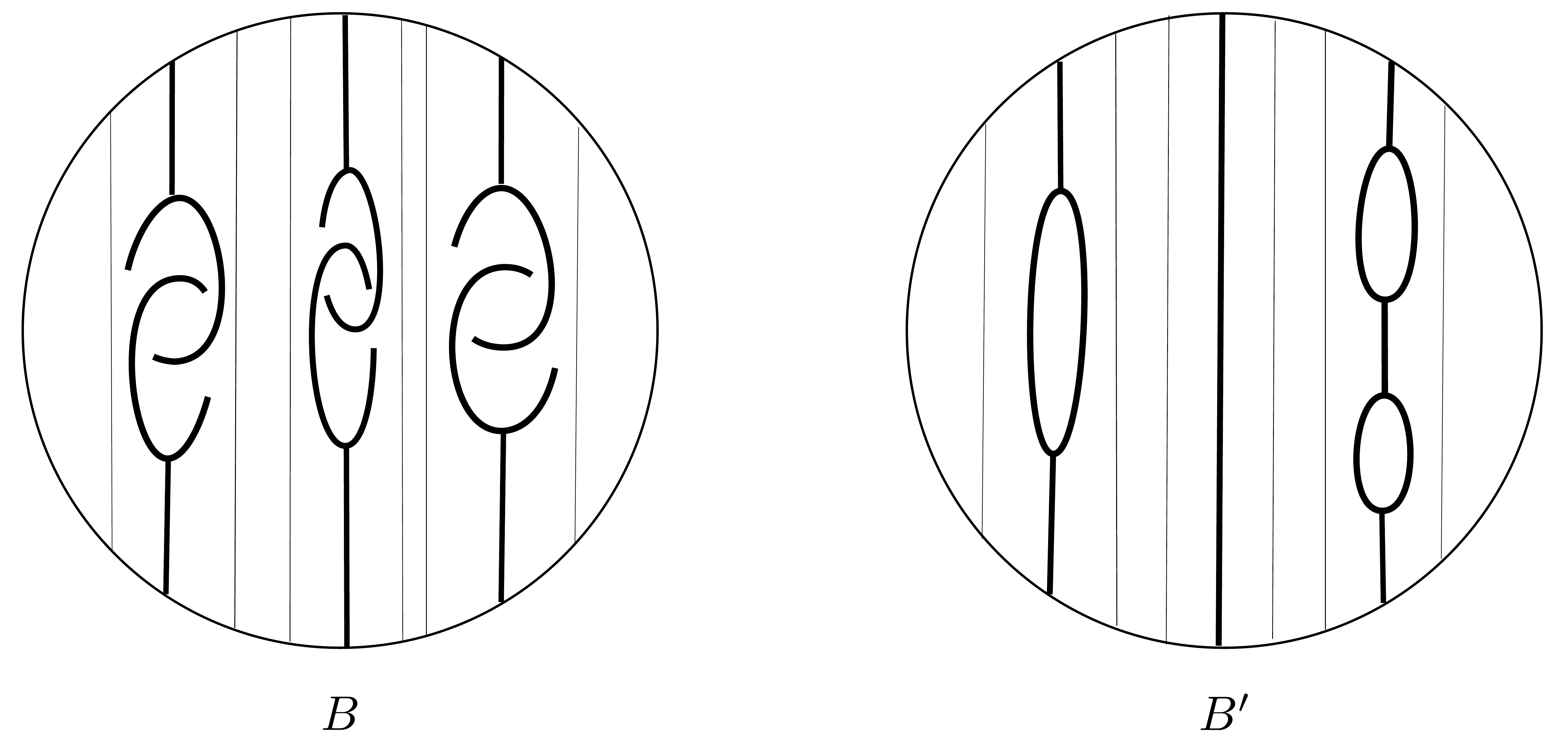}
	\caption{The graph $\Gamma$ in $B$ and $B'$.}
	\label{figure:graph2}
\end{figure}

As $(B'; c_1, \ldots, c_b)$ is a trivial tangle, we add $b-1$ tunnels, denoted $t_{n-b+1},\ldots, t_{n-1}$, to $N(\Gamma)$ in its exterior in $B'$, such that $N(\Gamma\cup t_{n-b+1}\cup \cdots\cup t_{n-1})$ can be isotoped to become the whole $B'$ with $n-2b$ trivial 1-handles. (See Figure \ref{figure:graph3}.)

\begin{figure}[ht]
	\includegraphics[scale=.15]{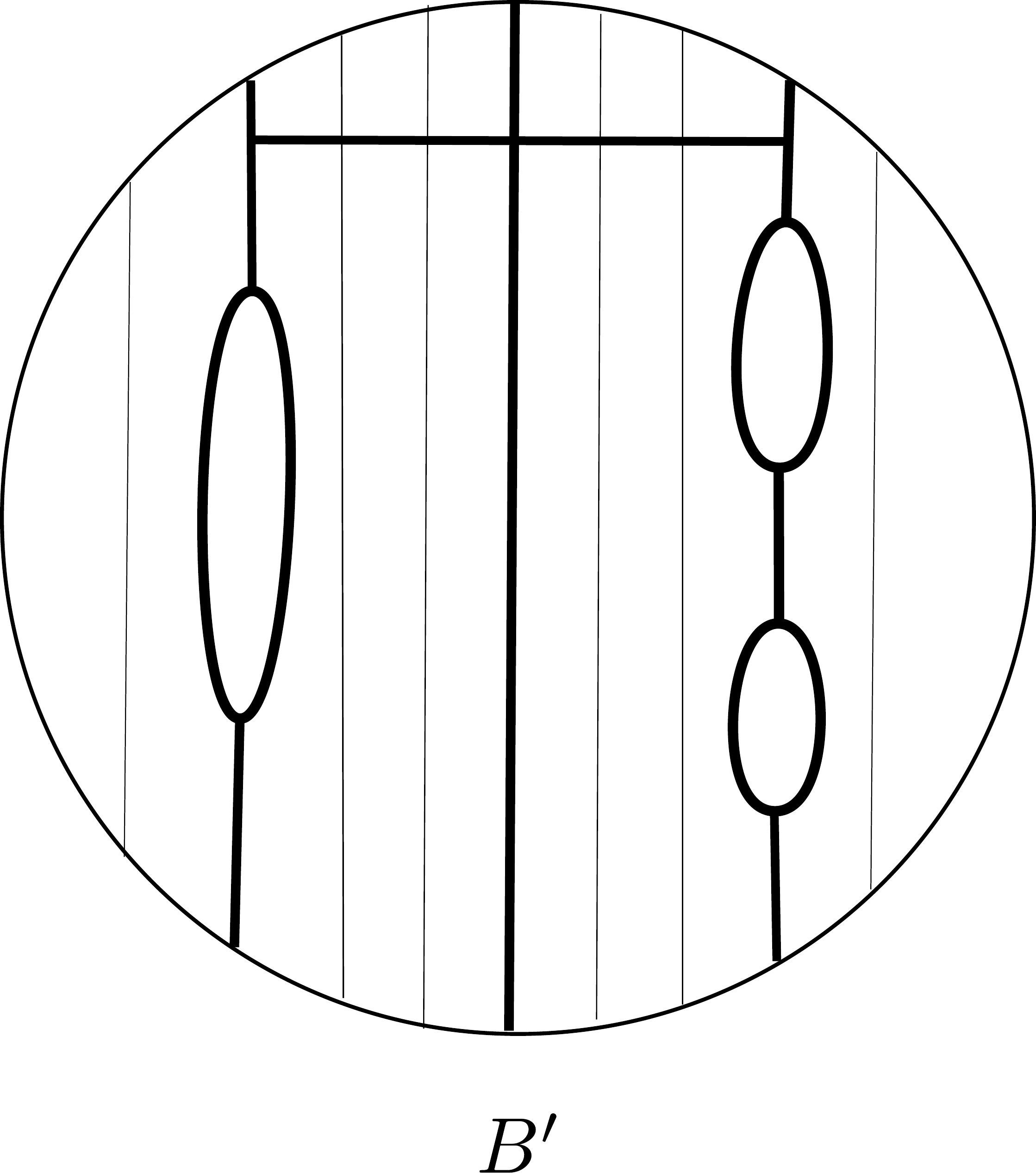}
	\caption{The graph $\Gamma\cup t_{n-b+1}\cup\ldots\cup t_{n-1}$ in $B'$.}
	\label{figure:graph3}
\end{figure}

Hence, the resulting space of the exterior of $L\cup t_1\cup\cdots\cup t_{n-1}$ is ambient isotopic to the exterior in $B$ of the union of $B\cap L$ with $n-2b$ trivial arcs in $B-L$ (that is, each arc co-bounds a disk with $B-L$ and these disks are disjoint). Since those $n-2b$ arcs are trivial, the exterior of $L\cup t_1\cup\cdots\cup t_{n-1}$ is a handlebody if and only if the exterior of $B\cap L$ in $B$  is a handlebody.
The components of $B\cap R$ cobound, each with an arc in $S$, mutually disjoint disks (from the bridge decomposition of $K$ defined by $S$). Let $D_R^i$, for $i=1, \ldots, b$, be a collection of these disks, $B^i_R$, for $i=1, \ldots, b$, the ball corresponding to the regular neighborhood in $B$ of $D_R^i$, together with the corresponding component of $B\cap R$, and let $B_R$ be the exterior in $B$ of the union of these balls. As the intersection of $B^i_R$ and $\partial B$ is a disk, $B_R$ is a ball. The components of $L$ in each 1-handle of $B\cap R$ define a trivial tangle in the respective 1-handle and in $B_R^i$. As $B_R^i \cap B_R$ is a disk disjoint from $L$, we have that the exterior in $B$ of $L$ is obtained by gluing handlebodies along a disk. That is, the exterior of $B\cap L$ in $B$ is a handlebody. Therefore, the tunnel number of $L$ is at most  $n-1$ and, as $L$ has $n$ components, it is also at least $n-1$. Hence, the tunnel number of $L$ is $n-1$. 

\section{Unknotting tunnel systems of satellite 3-chain links}\label{section:rank}

In this section, we will show that the tunnel number of a $3$-chain link $L$ over a non-trivial knot is at least $3$, and, hence, prove Theorem \ref{thm2}. Note that it is at least 2, because $L$ has three components. Suppose, by contradiction, that the tunnel number of $L$ is 2. 

Denote the components of $L$ by $L_i$, for $i=1, 2, 3$, and, respectively, by $D_i$, the components of $\mathcal{D}$, the disks they bound, as in the definition of satellite chain link.  We denote also by $L_i$ a regular neighborhood of the corresponding component of $L$. The regular neighborhood of $D_1\cup D_2\cup D_3$ is a solid torus, with $K$ its core. Let $S$ be a sphere, such that $S-L$ is essential in the exterior of $L$, bounding a ball $B$ intersecting $L$ only in two arcs of the same component of $L$, say $L_1$, and with the other components in the exterior of $B$. Such a sphere exists, since each pair of components of $L$ is linked, and the two arcs of $B\cap L$ have to be parallel with a knotted pattern $K_0$. We refer to $B$ and its complement as the {\em inside} and {\em outside} of $S$, respectively. (See Figure \ref{figure:sphere}).

\begin{figure}[ht]
	\includegraphics[scale=.1]{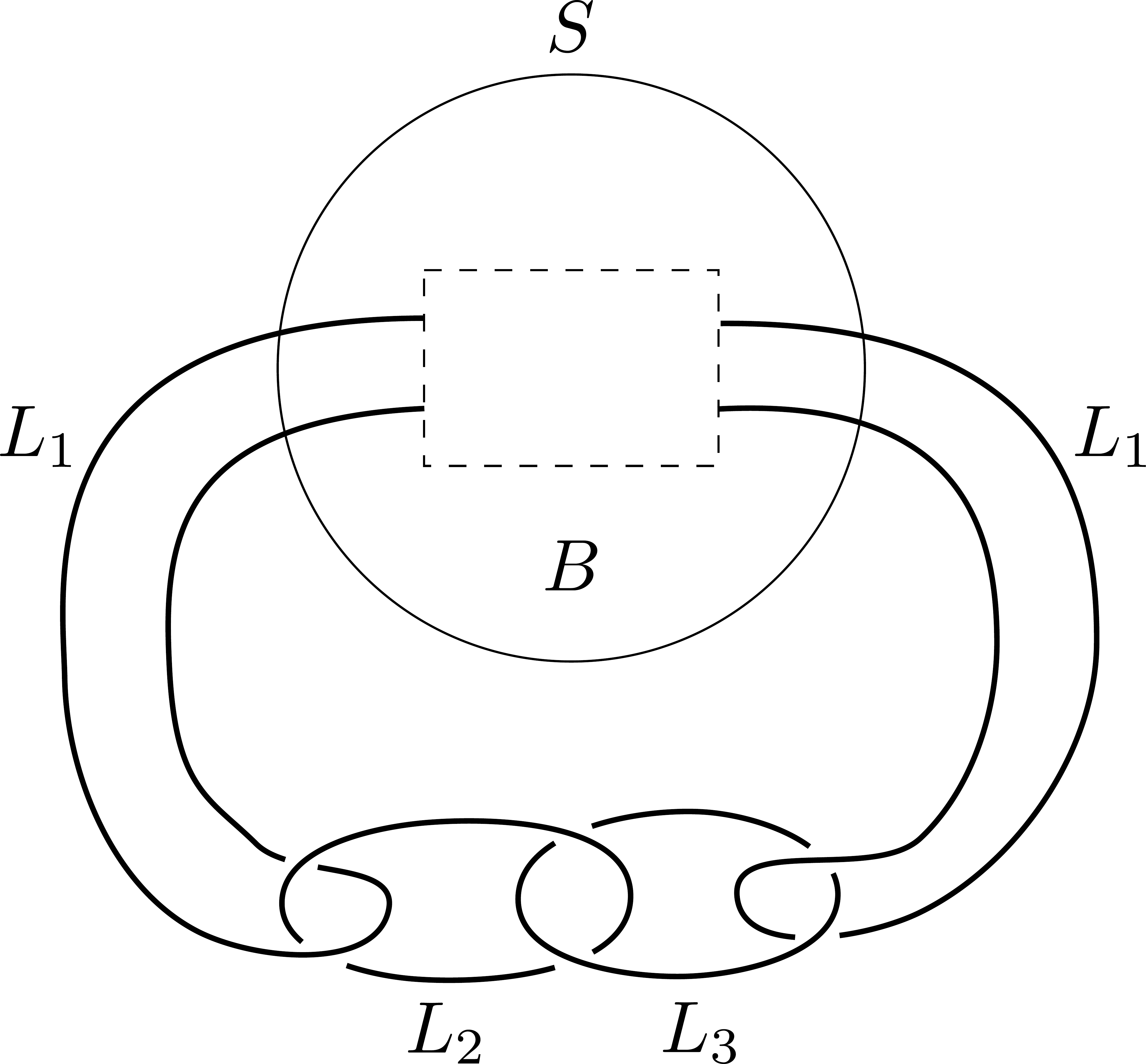}
	\caption{The sphere $S$ intersecting $L_1$ and the components $L_2$ and $L_3$ on the outside of $S$.}
	\label{figure:sphere}
\end{figure}
	
Since the tunnel number of $L$ is $2$, there exists a system $\tau$ of two arcs in the exterior of $L$ such that the exterior $H$ of $L\cup\tau$ is a genus 3 handlebody. We denote a regular neighborhood of $L\cup\tau$ by $G$.  We denote also by $\tau$ a regular neighborhood of these arcs.

\begin{lemma}\label{tau and S}
	No $\tau$ is disjoint from $S$.
\end{lemma}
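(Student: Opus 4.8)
The plan is to argue by contradiction using an innermost-disk argument on the intersection $\tau\cap S$. Suppose some tunnel arc $\tau$ (meaning one of the two arcs of the system) is disjoint from $S$. I would first recall that $S-L$ is an essential sphere in $E(L)$ separating a knotted two-strand tangle $(B, B\cap L_1)$ from the rest of the link, and that $H=E(L\cup\tau)$ is a genus $3$ handlebody with $G=N(L\cup\tau)$. The key object is $S\cap H$: since $S$ meets $L$ only in two arcs of $L_1$, and $\tau$ is disjoint from $S$ by assumption, the surface $S':=S\cap H$ is a planar surface properly embedded in $H$ with four boundary circles on $\partial G$ — namely two meridians of the inside strands of $L_1$ (on $\partial B\cap\partial G$ viewed from inside) — wait, more carefully $S$ is cut by $G$ into the portion outside the two $L_1$-strands, so $S\cap H$ is a four-punctured sphere, or rather an annulus-with-two-holes. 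The point is it is an incompressible, $\partial$-incompressible planar surface in the handlebody $H$, and one must analyze what such a surface can be.

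Next I would use the structure of essential planar surfaces in a handlebody: any incompressible surface in $H$ is $\partial$-compressible unless it is a disk, and a $\partial$-compression of $S\cap H$ would correspond to a meridian disk of $H$ met by $S$ in an arc. The strategy is to show that the knottedness of the tangle $(B, B\cap L_1)$ forces either $S$ to be compressible in $E(L)$ (contradicting that $S-L$ is essential) or $\tau$ to actually intersect $S$. Concretely: if $\tau$ is disjoint from $S$, then $\tau$ lies entirely inside $B$ or entirely outside $B$. If $\tau$ is outside $B$, then $B\cap(L\cup\tau)=B\cap L_1$ is still just the knotted two-strand tangle, so $S$ persists as an essential (non-boundary-parallel, incompressible) sphere in the handlebody exterior $H$ of $L\cup\tau$ minus a neighborhood — but a handlebody contains no essential sphere and no incompressible non-$\partial$-parallel planar surface with the relevant boundary pattern, which is the contradiction. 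If $\tau$ is inside $B$, then $B\cap(L\cup\tau)$ is the knotted tangle together with an arc, and one argues the resulting tangle is still sufficiently knotted that $S$ cannot bound a handlebody-compatible region; here one uses that $K_0$ is knotted, so even after adding one arc the inside of $S$ cannot be absorbed.

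The key steps, in order, are: (1) set up $S\cap H$ as a planar surface in the genus $3$ handlebody $H$ and record its boundary pattern on $\partial G$; (2) split into the cases $\tau\subset B$ and $\tau\cap B=\emptyset$; (3) in the outside case, observe $(B,B\cap L_1)$ is unchanged and derive that $H$ would contain an essential sphere or an essential annulus/planar surface incompatible with being a handlebody, contradiction; (4) in the inside case, use that the two strands of $B\cap L_1$ form a knotted pattern $K_0$ and that adding a single arc $\tau$ to a knotted two-strand tangle cannot unknot it, so $S$ still fails to bound a ball-with-trivial-contents, again contradicting that $E(L\cup\tau)$ is a handlebody; (5) conclude in all cases.

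The main obstacle I expect is step (4), the inside case: one must show that a knotted two-strand tangle remains "non-trivial enough" after a single tunnel arc is added inside $B$, so that $S$ cannot cobound anything compatible with $H$ being a handlebody. This likely requires a careful sutured-manifold or $\partial$-reduction argument, or an appeal to the fact that $\partial B\cap H$ being compressible in $H$ would produce an essential disk in $E(B\cap L_1\cup\tau)$ inside $B$, forcing the tangle to be split or unknotted — contradicting knottedness of $K_0$. Controlling exactly which tangles can arise and ruling them all out is the delicate part; the outside case, by contrast, should be a relatively clean application of incompressibility of $S$ together with the absence of essential spheres in handlebodies.
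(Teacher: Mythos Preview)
Your proposal has two genuine gaps.

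First, the ``inside'' case $\tau\subset B$ never arises, so there is no hard step (4) to worry about. Since $H=E(L\cup\tau)$ is a genus~$3$ handlebody, $N(L\cup\tau)$ is also a connected genus~$3$ handlebody. With $L$ having three components and $\tau$ consisting of two arcs, the two arcs must connect the three components of $L$ into a tree; in particular every arc of $\tau$ has at least one endpoint on $L_2$ or on $L_3$. As $L_2$ and $L_3$ lie outside $S$, any arc of $\tau$ disjoint from $S$ is forced to lie outside $S$. Hence if $\tau$ is disjoint from $S$ then $\tau$ is entirely outside $S$. This is exactly the paper's one-line reduction (``since $\tau$ has two components and $L_2$ and $L_3$ are outside $S$''), and it eliminates the case you flagged as the main obstacle.

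Second, your argument for the outside case does not work as written. Handlebodies certainly do contain incompressible, non-$\partial$-parallel planar surfaces (for instance a separating three-holed sphere in a genus~$2$ handlebody), so the assertion that $H$ ``contains no incompressible non-$\partial$-parallel planar surface with the relevant boundary pattern'' is not a fact you can simply invoke; making it precise and proving it for the four-punctured sphere $S\cap H$ would require real work you have not outlined. The paper sidesteps this entirely: once $\tau$ is outside $S$, the two parallel strands of $B\cap L_1$ with knotted pattern $K_0$ produce an essential \emph{torus} in $H$ (the torus following $K_0$ inside $B$), and a handlebody is atoroidal. That closed-surface obstruction is immediate, whereas your planar-surface obstruction is not.
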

\begin{proof}
	If there is some $\tau$ disjoint from $S$, then it is  outside $S$, since $\tau$ has two components and $L_2$ and $L_3$ are outside $S$. Hence, there is an essential torus in the handlebody $H$, the torus that follows the pattern of $K_0$ in $B$, which is a contradiction.
\end{proof}

Among all possible $\tau$ and spheres $S$, consider  a pair such that the number of intersections of $\tau$ and $S$, $|\tau\cap S|$, is minimal. By Lemma \ref{tau and S}, $\tau \cap S$ is non-empty. We refer to a disk of intersection of $\tau$ with $S$ as a $t$-disk, and we denote them by $t_1,\ldots,t_n$, and  we refer to a disk of intersection of $L_1$ with $S$ as a $c$-disk, and we denote them by $c_1,c_2,c_3,c_4$. 
The components of $G-S$ are balls, solid tori or a genus two handlebody. Each component of $G-S$ containing $L_2$ or $L_3$ has genus one or two. In case it has genus two, the $c$-disks are all parallel in $G$, otherwise the $c$-disks are parallel two-by-two or three-by-one, as schematically illustrated in Figure \ref{figure:components}. Since $K$ is non-trivial, the punctured sphere $S$ is essential in the exterior of $L$.

\begin{figure}[ht]
	\includegraphics[scale=0.046]{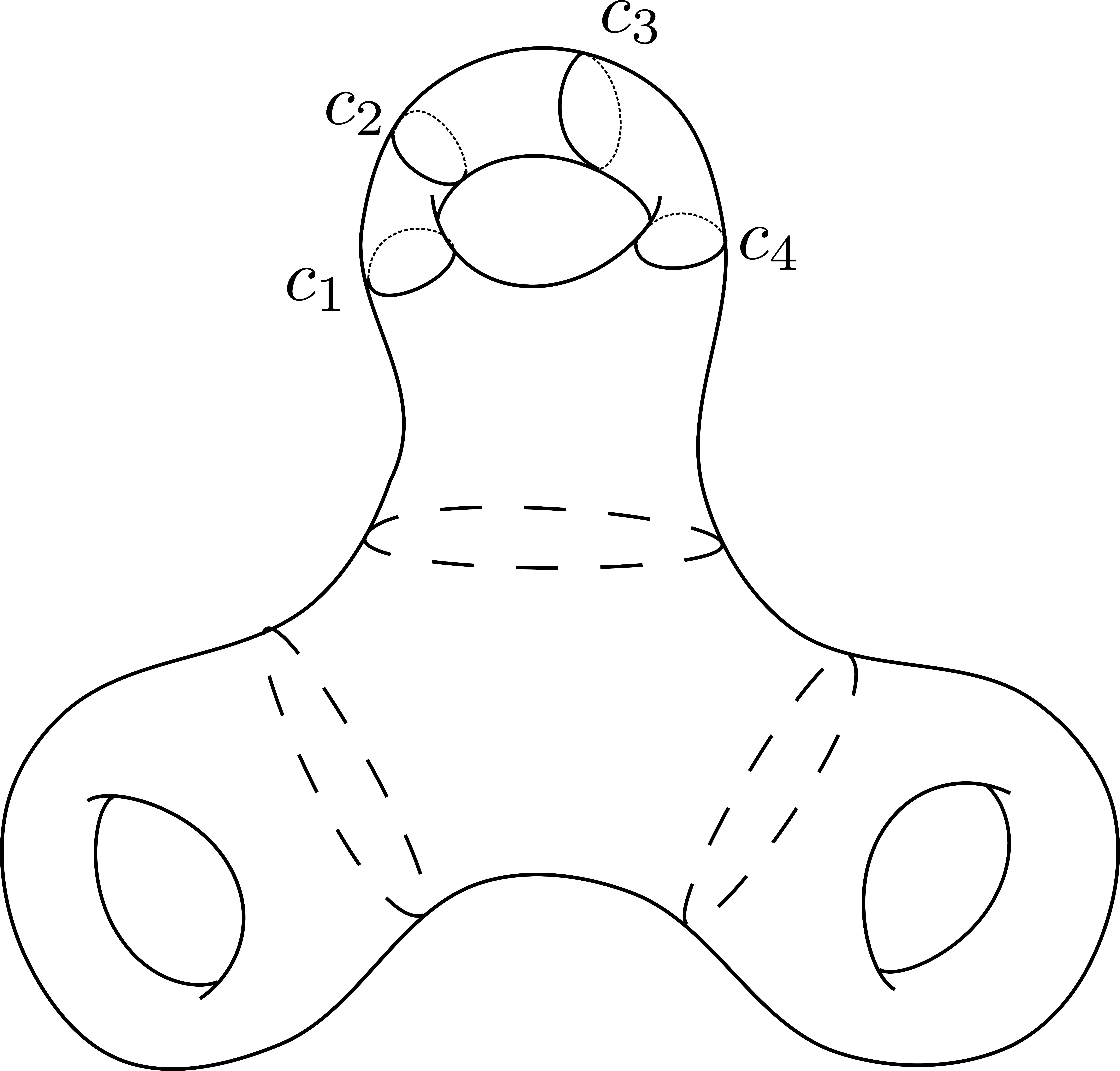}
	\includegraphics[scale=0.046]{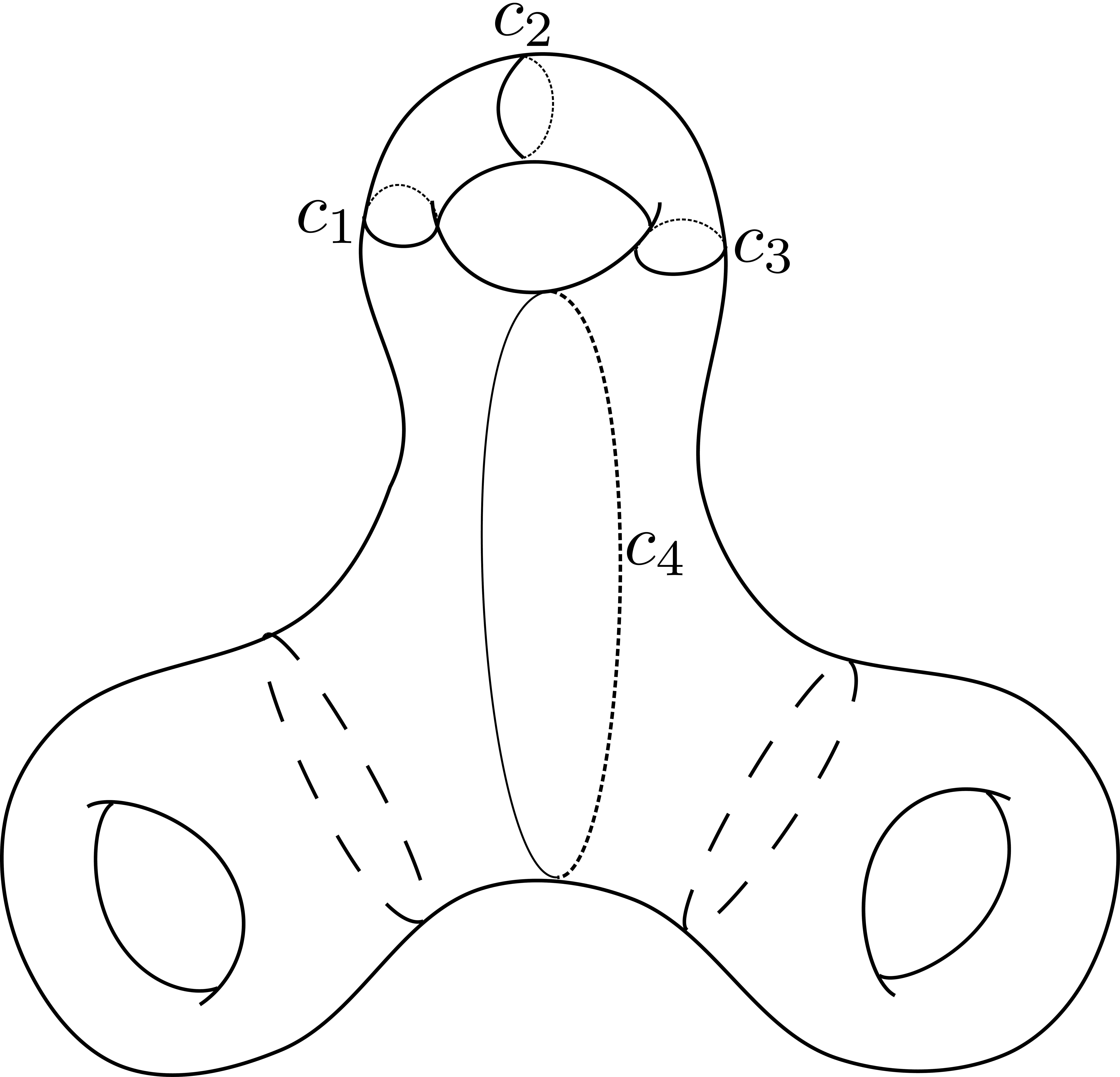}
	\includegraphics[scale=0.046]{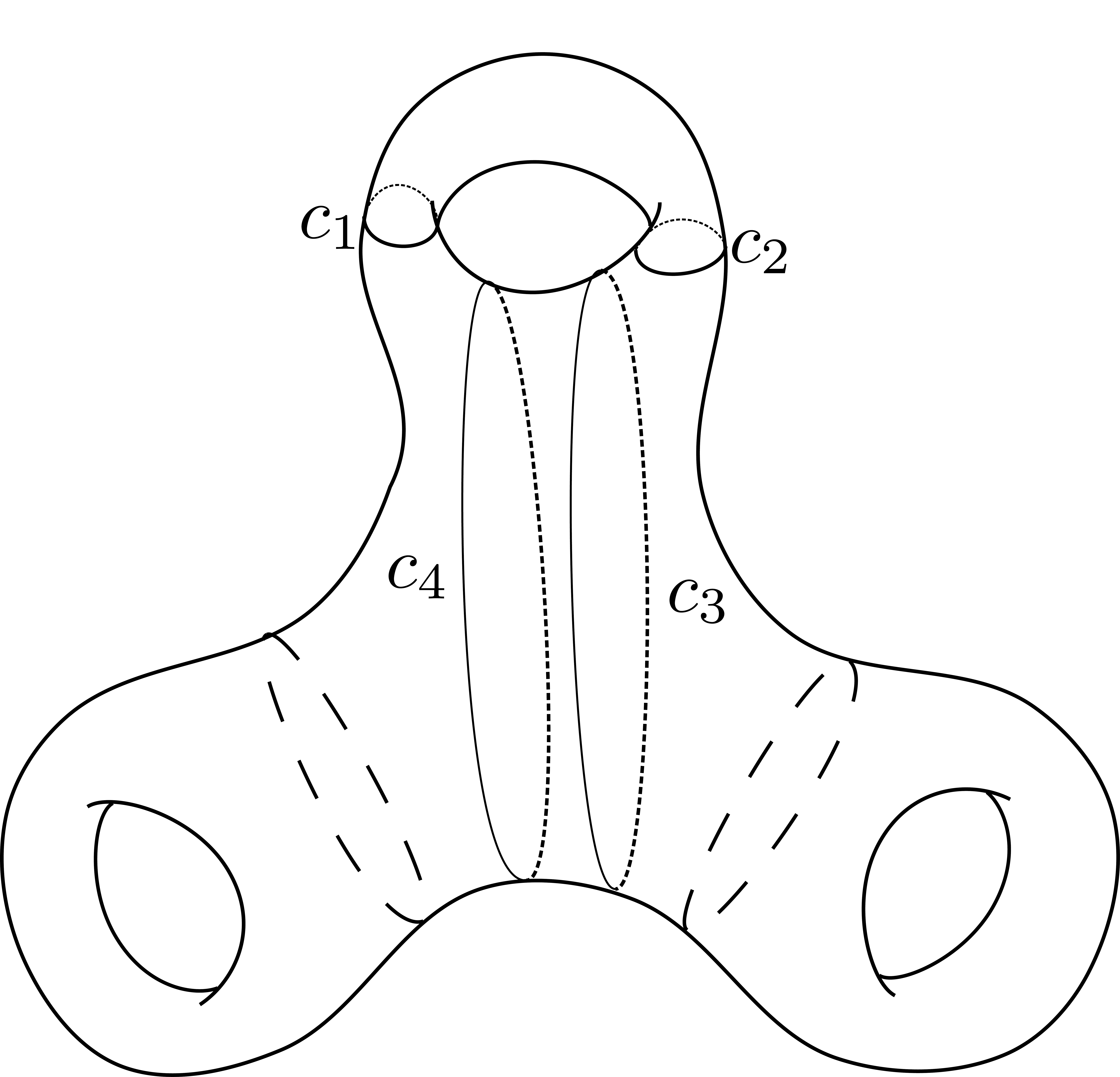}
	\caption{The possible components of $G-S$ (each dashed line represents a set of $t$-disks, possibly empty, but at least one nonempty).}	
	\label{figure:components}
\end{figure}

For the chosen $\tau$, consider a complete system of meridian disks $E=E_1\cup E_2\cup E_3$ of $H$, and assume that the number of intersections of $E$ with $S$, $|E\cap S|$, is minimal among all choices of $E$. 
Note that $E\cap S$ is non-empty, as, otherwise, the closure of $S-S\cap G$ would be a properly embedded essential punctured disk in the ball (closure of) $H-E$, which for fundamental group reasons is impossible. 
Furthermore, no component of $E\cap S$ is a closed curve. Otherwise, considering an innermost one in $E$, we obtain a compressing disk for $S-L$ or it also bounds a disk in $S$ 
and by cutting and pasting $E$ along this disk we can reduce $|E\cap S|$, contradicting its minimality.

Let $\alpha$ be an arc of $E\cap S$ in $E$.  Let $O$ be a disk cut by $\alpha$ from $E$, $\beta$ the arc $\partial O - \alpha$, which lies in $\partial G$, and $\omega_1$ and $\omega_2$ the disk components of $G\cap S$ containing the ends of $\alpha$. (See Figure \ref{figure:meridian_disk}.) If $\omega_1=\omega_2$, then $\alpha$ is called a {\em loop}.\\

\begin{figure}[ht]
	\includegraphics[scale=.07]{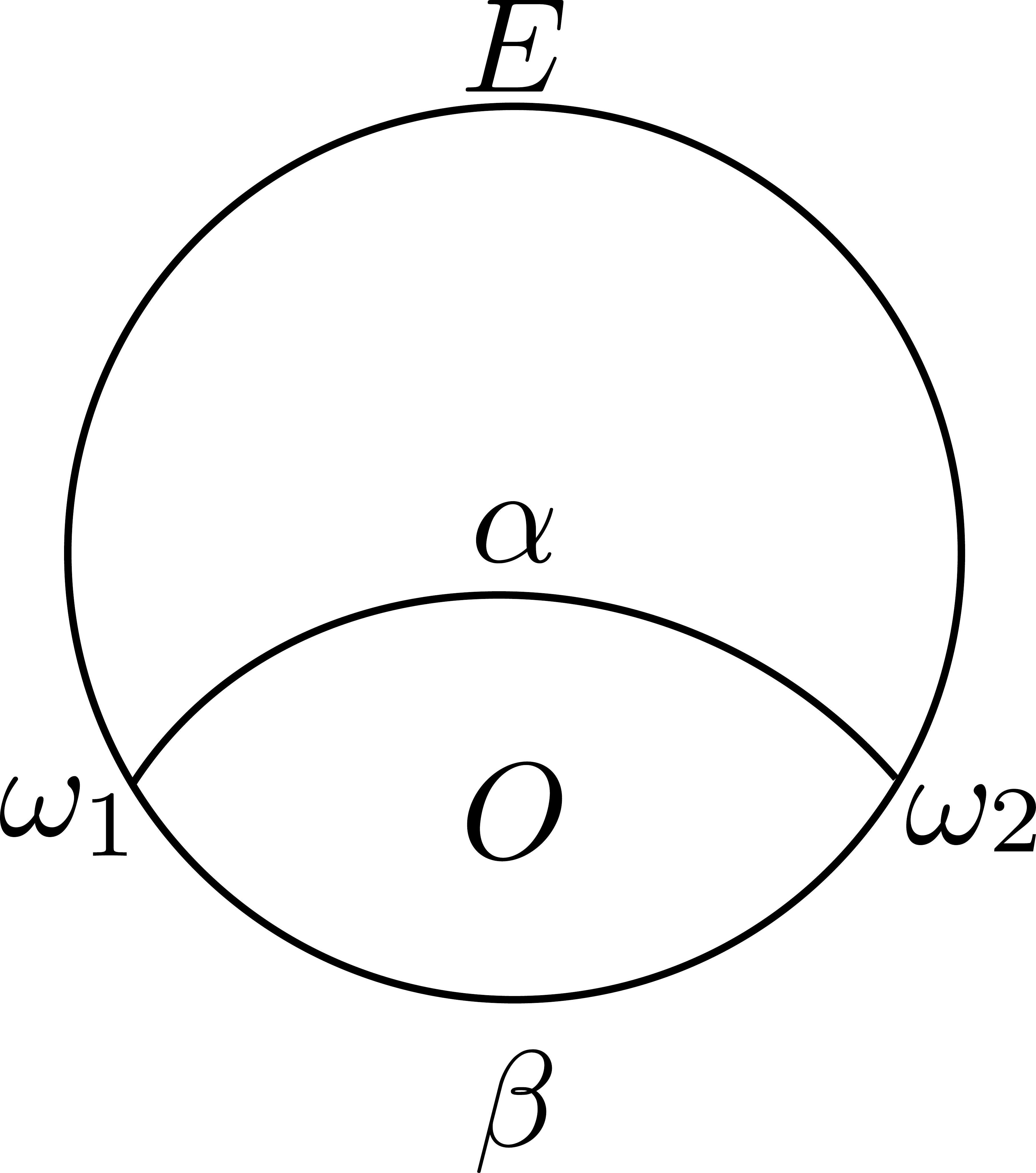}
	\caption{A component of $E\cap S$.}
	\label{figure:meridian_disk}
\end{figure}

\begin{lemma}\label{claim:looped_disks} For every $t$-disk $t_i$, an outermost arc of $E\cap S$ in $E$ among those with an end at $t_i$ is a loop. 
\end{lemma}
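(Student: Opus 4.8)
The plan is to argue by contradiction, exploiting the minimality of $|\tau\cap S|$. Suppose $t_i$ is a $t$-disk and $\alpha$ is an outermost arc of $E\cap S$ in $E$ among the arcs incident to $t_i$, and suppose $\alpha$ is not a loop, so the two disk components $\omega_1,\omega_2$ of $G\cap S$ containing the endpoints of $\alpha$ are distinct, with (say) $\omega_1=t_i$ a $t$-disk. Let $O$ be the outermost subdisk of $E$ cut off by $\alpha$, so the interior of $O$ misses $S$ and $O$ meets $S$ only along $\alpha$; let $\beta=\partial O-\alpha$ be the complementary arc, which lies on $\partial G$. First I would use $O$ to guide an isotopy of $\tau$: since $O$ is a disk in $H$ with one boundary arc $\alpha$ on $S$ and the interior disjoint from $S$, sliding the strand of $\tau$ corresponding to $t_i$ across $O$ pushes $t_i$ through $S$, and the endpoint of $\alpha$ on $\omega_2$ records what happens on the far side.

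The key point is a case analysis on what $\omega_2$ is. If $\omega_2$ is a $c$-disk (a disk of $S\cap L_1$) or a $t$-disk of a \emph{different} arc of $\tau$, then the arc $\beta$ runs along $\partial G$ from $\partial t_i$ to that other disk; pushing $t_i$ along $O$ across $S$ then either reduces $|\tau\cap S|$ outright (if the corresponding strand of $\tau$ can be pulled entirely across) or at least removes the pair of intersection points at the ends of $\alpha$ while creating none, again contradicting minimality of $|\tau\cap S|$. The only way the isotopy across $O$ can fail to reduce $|\tau\cap S|$ is if $\beta$ connects $\partial t_i$ back to a disk on the \emph{same} copy of $\tau$ and in a way that the slide does not eliminate intersections — and one shows, using that $\alpha$ is outermost among arcs with an end at $t_i$ (so no arc of $E\cap S$ separates $\alpha$ from $\partial O$ inside $E$, hence the slide is ``clean''), that this forces $\omega_1=\omega_2=t_i$, i.e. $\alpha$ is a loop. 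I would organize this as: (i) $\omega_2$ cannot be a $c$-disk, because then the slide reduces $|\tau\cap S|$; (ii) $\omega_2$ cannot be a $t$-disk of the other component of $\tau$, for the same reason; (iii) $\omega_2$ cannot be a $t$-disk of the same component of $\tau$ but distinct from $t_i$, because a slide along $O$ then cancels $t_i$ with $\omega_2$ (or reduces the count), using outermostness to ensure no other intersections obstruct the finger move; hence $\omega_2=t_i$ and $\alpha$ is a loop.

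The main obstacle I expect is step (iii): ruling out that $\alpha$ joins two distinct $t$-disks on the same arc of $\tau$. Here the subtlety is that sliding one $t$-disk toward another along $O$ could in principle create new intersections of $\tau$ with $S$ elsewhere, so one must use the outermostness of $\alpha$ carefully — the disk $O$ together with the subarc of $\tau$ between $t_i$ and $\omega_2$ bounds a disk that can be used to isotope that whole sub-strand of $\tau$ to the $S$-side of $\beta$, and outermostness guarantees this isotopy is supported in a neighborhood of $O$ and meets $S$ only in the two points being cancelled. I would also need to note, as already recorded in the text, that closed components of $E\cap S$ have been removed, so every component of $E\cap S$ incident to $t_i$ is an arc and ``outermost'' makes sense. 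Finally, the essentiality of the punctured sphere $S$ in $E(L)$ (noted just before the lemma) is what prevents degenerate situations where $\beta$ could be pushed off through a trivial region without affecting $\tau\cap S$.
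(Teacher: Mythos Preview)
Your overall strategy---use the disk $O$ to reduce $|\tau\cap S|$---is the paper's, but your implementation has a genuine gap and is more complicated than necessary. The gap is the assertion that ``the interior of $O$ misses $S$.'' That is not what \emph{outermost among arcs with an end at $t_i$} gives you: it only ensures that no arc of $E\cap S$ lying in $O$ has an end at $t_i$; arcs with both ends at other disks of $G\cap S$ may well sit inside $O$. Your isotopy description, and in particular your treatment of case~(iii) where you say the slide ``meets $S$ only in the two points being cancelled,'' rests on this false claim.

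The paper sidesteps this by recasting the move as a stabilization--destabilization, which also eliminates your case analysis. Since $\alpha$ is boundary-parallel in $H$ via $O$, adding $N(\alpha)$ to $G$ is a genus-$4$ stabilization. What the correct outermostness hypothesis \emph{does} buy is that $\beta=\partial O-\alpha$ meets $\partial t_i$ only at the single endpoint of $\alpha$: any further crossing of $\beta$ with $\partial t_i$ would be the endpoint of an arc of $E\cap S$ in $O$ with an end at $t_i$, contradicting outermostness. Thus $\partial O$ meets $\partial\omega_1$ exactly once, so $\partial\omega_1$ is primitive in $H-N(\alpha)$, and one can destabilize by cutting $G\cup N(\alpha)$ along $\omega_1$. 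The result is a new genus-$3$ tunnel system in which the $t$-disk $t_i$ has been removed from the intersection with $S$, contradicting minimality of $|\tau\cap S|$---uniformly, regardless of whether $\omega_2$ is a $c$-disk, a $t$-disk on the other arc of $\tau$, or a distinct $t$-disk on the same arc. The essentiality of $S$ plays no role in this lemma.
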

\begin{proof}  Let $\alpha$ be an outermost arc among those with one end at $\omega_1=t_i$ and $O$ the corresponding outermost subdisk of $E$. Since $\alpha$ is parallel to the boundary of $G$, then $G\cup N(\alpha)$ defines a stabilization of the Heegaard decomposition induced by the boundary of $G$. If $\omega_2\neq\omega_1$, then $\partial O$ only intersects $\omega_1$ once. Therefore the boundary of $\omega_1$ is primitive in $H-N(\alpha)$. Then, we can perform a destabilization of the genus-4 Heegaard decomposition defined by $G\cup N(\alpha)$, which corresponds to cutting $G\cup N(\alpha)$ along $\omega_1$, obtaining again a genus 3 Heegaard surface. This procedure reduces $|\tau\cap S|$, contradicting its minimality (this process corresponds to perform an isotopy that sends a neighborhood of $t_i$ through $O$ to a neighborhood of $\alpha$).
\end{proof}

Consider on $S$ the graph $\Gamma=(G\cap S,E\cap S)$ and the subgraph $\Gamma_c$ induced by the $c$-disks.

	\begin{lemma}\label{claim:loopless_disks} The graph $\Gamma_c$ has at least two connected components each with a $c$-disk without loops. 
\end{lemma}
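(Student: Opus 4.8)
The plan is to analyze the graph $\Gamma$ on the sphere $S$ and use an Euler-characteristic / innermost-disk argument to locate $c$-disks without loops. First I would observe that since $S$ is a sphere, $\Gamma$ is a planar graph, and the faces of $\Gamma$ are exactly the components of $S - (G \cap S \cup E \cap S)$; on the other side, the arcs of $E \cap S$ cut the meridian disks $E_1, E_2, E_3$ into subdisks $O$, each of which has its arc $\beta = \partial O - \alpha$ lying in $\partial G$. The key structural input is Lemma \ref{claim:looped_disks}: for each $t$-disk $t_i$, the outermost arc of $E \cap S$ meeting $t_i$ is a loop based at $t_i$. This severely constrains how $t$-disks can be "leaves" of the graph, and I would want to combine it with the bipartite-type structure near the $c$-disks coming from the three pictures in Figure \ref{figure:components}.

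The main line of argument I would pursue: suppose for contradiction that $\Gamma_c$ fails the conclusion, i.e. every connected component of $\Gamma_c$ that contains a loop-free $c$-disk is unique (at most one such component), or some component has all its $c$-disks carrying loops. I would then look at an innermost loop in $E$ — among all loops $\alpha$ of $E \cap S$, pick one cutting off a subdisk $O$ of $E$ whose interior meets $S$ in no loops (only non-loop arcs, or nothing). Using Lemma \ref{claim:looped_disks}, any non-loop arc in $O$ with an end on a $t$-disk would have an outermost representative that is a loop, contradicting innermost-ness; so all arcs inside such an innermost $O$ have both ends on $c$-disks. The disk $O$ together with the sub-arc $\beta$ of $\partial G$ then provides, after the usual cut-and-paste, either a reduction of $|\tau \cap S|$ or of $|E \cap S|$, unless the loop $\alpha$ is based at a $c$-disk $\omega_1$ that is "essential" in the relevant component of $G - S$. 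Counting: the loop $\alpha$ on the sphere $S$ separates $S$ into two disks; the $c$-disks and $t$-disks get partitioned, and tracking the at-most-four $c$-disks together with the parallelism data from Figure \ref{figure:components} forces at least two distinct components of $\Gamma_c$ to each contain a $c$-disk that no loop can be based at — i.e., a loop-free $c$-disk.

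More concretely, I would argue as follows. Let $\alpha$ be a loop at a $c$-disk $c_j$, cutting $O$ from $E$ and a disk $\delta$ from $S$ with $\partial \delta = \alpha \cup (\text{an arc } a \subset \partial c_j)$. The disk $O \cup \delta$ (pushed off) is properly embedded in $H$ and its boundary lies in $\partial G \cup (\text{disk } c_j)$; banding $c_j$ along $O$ would modify $E$ and either reduce $|E \cap S|$ or show $c_j$'s boundary curve is inessential in the component $V$ of $G - S$ containing it — but $V$ is a ball, solid torus, or genus-two handlebody with the $c$-disks in prescribed parallelism classes, so at most one parallelism class of $c$-disks in each $V$ can support loops before we get a contradiction with minimality of $|\tau \cap S|$ (via the destabilization as in Lemma \ref{claim:looped_disks}) or with essentiality of the punctured sphere $S$. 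Since there are four $c$-disks distributed among the components as $1$-$1$-$1$-$1$, $2$-$2$, or $1$-$3$, and since $S$ being essential rules out the degenerate cases, one deduces that loops can occur on $c$-disks lying in at most one "side", leaving a loop-free $c$-disk in at least two distinct connected components of $\Gamma_c$.

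The hard part will be the bookkeeping in that last step: making the correspondence between a loop based at a $c$-disk and an actual isotopy/destabilization that contradicts minimality, while simultaneously handling all three configurations of Figure \ref{figure:components} and the possibility that $O$'s interior contains further non-loop arcs running between $c$-disks (which may connect the graph $\Gamma_c$ in ways that need to be reconciled with the claimed two components). I expect one must first prove an auxiliary statement that a loop based at $c_j$ whose disk $\delta \subset S$ contains no $t$-disk can always be removed, reducing to loops whose $S$-side disk $\delta$ contains at least one $t$-disk, and then apply Lemma \ref{claim:looped_disks} inside $\delta$ to extract a contradiction. Organizing this so that the connectivity count of $\Gamma_c$ comes out cleanly — rather than merely "some $c$-disk is loop-free" — is the delicate point, and is presumably where the hypothesis that the chain link has exactly three components (hence exactly four $c$-disks) is used decisively.
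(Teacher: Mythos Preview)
Your plan misses the key organizing idea and, as you yourself acknowledge, does not close the argument. The paper's proof is a three-line application of the lemmas you already have, and the trick is to start from a loop at a \emph{$t$-disk}, not at a $c$-disk.

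Here is what you are overlooking. By Lemma~\ref{tau and S} there is at least one $t$-disk, and by Lemma~\ref{claim:looped_disks} the outermost arc of $E\cap S$ meeting that $t$-disk is a loop $\alpha$ based there. Since $S$ is a sphere, $\alpha$ together with its base $t$-disk $\omega$ separates $S$ into two disks. Now run a bare innermost-loop argument on \emph{each} side: if every disk of $G\cap S$ lying in one side carried a loop, an innermost such loop would, together with its base disk, bound a subdisk of $S$ containing no other vertices, and cutting and pasting $E$ along it would lower $|E\cap S|$. Hence each side of $S-\omega-\alpha$ contains a disk with no loops. By Lemma~\ref{claim:looped_disks} again, a loop-free disk cannot be a $t$-disk, so each side contains a loop-free $c$-disk. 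Finally, any edge of $\Gamma_c$ joins two $c$-disks and therefore cannot cross $\alpha\cup\omega$ (whose vertex $\omega$ is a $t$-disk), so these two loop-free $c$-disks lie in different connected components of $\Gamma_c$. Done.

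Your proposed route --- analyzing loops based at $c$-disks, banding, tracking the three parallelism patterns of Figure~\ref{figure:components}, and trying to bound how many $c$-disk parallelism classes can ``support loops'' --- is not wrong in spirit, but it is solving a harder problem than necessary. The difficulty you flag at the end (getting two components of $\Gamma_c$, not merely one loop-free $c$-disk) evaporates once you let a $t$-disk loop do the separating for you: the separation is by a $t$-vertex, so it is automatically invisible to $\Gamma_c$, and the two-component conclusion is immediate. None of the case analysis on $G-S$, the destabilization mechanism, or the count of four $c$-disks is needed here.
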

\begin{proof}
From Lemmas \ref{tau and S} and \ref{claim:looped_disks}, there is a loop $\alpha$ at a $t$-disk $\omega$ of $G\cap S$ which separates $S$ into two components. If in one of these components every disk has loops, then there is an innermost loop. This loop and its corresponding disk of $G\cap S$ bound a disk in $S$, that can be used to reduce $|E\cap S|$ by cutting and pasting $E$ along this disk, which contradicts its minimality. Therefore in each component of $S-\omega-\alpha$ there is a disk without loops. By Lemma \ref{claim:looped_disks}, these disks are $c$-disks $c_i$ and $c_j$. Since $c_i$ and $c_j$ are separated by $\alpha\cup\omega$, and $\omega$ is a $t$-disk, they are in different connected  components of $\Gamma_c$.
\end{proof}

\begin{figure}[ht]
\includegraphics[scale=.07]{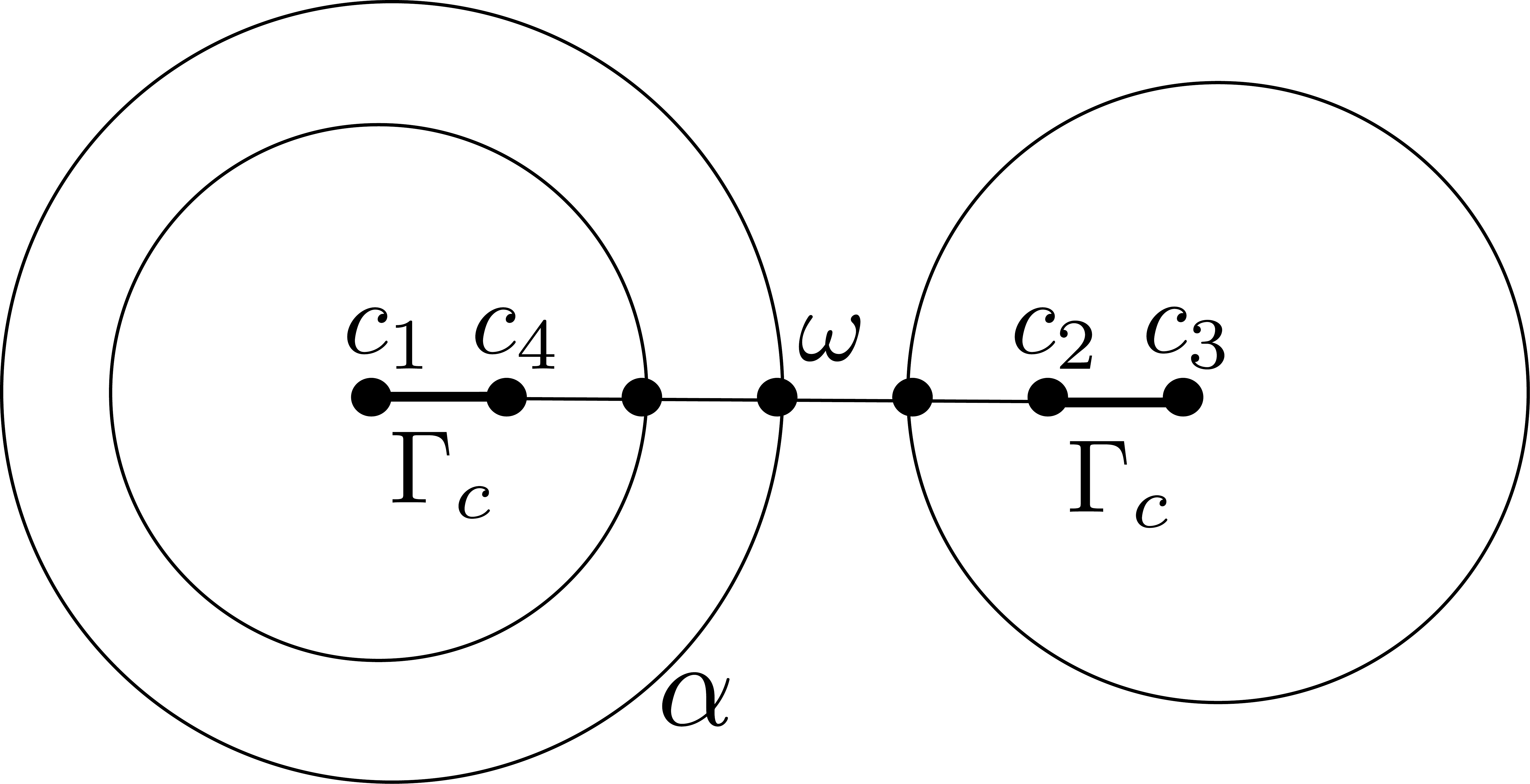}
		\caption{The graph $\Gamma$ and the subgraph $\Gamma_c$.}	
	\label{figure:graph}
\end{figure}

\

Suppose now that $\alpha$ is an outermost arc of $E\cap S$ in $E$, that is, the corresponding outermost disk $O$ is disjoint from $S$, and let $Q$ be the component of $G-S$ that contains $\beta$.

\begin{lemma}\label{claim:not_ball}$Q$ is not a ball.
\end{lemma}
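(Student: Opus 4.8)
The plan is to argue by contradiction: suppose the component $Q$ of $G - S$ containing the arc $\beta = \partial O - \alpha$ is a ball. Recall the setup: $\alpha$ is an outermost arc of $E \cap S$ in $E$, so the outermost subdisk $O$ is disjoint from $S$, its boundary consists of $\alpha$ (on $S$) and $\beta$ (on $\partial G$), and $\beta$ runs along the boundary of $Q$ between two disk components $\omega_1, \omega_2$ of $G \cap S = \partial Q \cap S$. The key observation I want to exploit is that $O$ is a properly embedded disk in $Q$ (since $O \cap S = \alpha$ lies in $\partial Q$ and the interior of $O$ misses $S$, hence lies in a single component of $G - S$, which must be $Q$ since $\beta \subset \partial Q$). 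So $O$ is a meridian-type disk for the ball $Q$ meeting $S$ in the single arc $\alpha$.

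First I would use the classification of components of $G - S$ from Figure \ref{figure:components} together with the hypothesis that $Q$ is a ball: a ball component $Q$ of $G - S$ contains no component $L_i$ of $L$ (those components live in the genus-one or genus-two pieces), so $Q$ is a ball meeting $S$ only in $t$-disks and $c$-disks along its boundary sphere, and $\partial Q$ decomposes as these disks glued along the "tube" pieces of $\partial G$. Then $O$, being an essential disk properly embedded in the ball $Q$ with $\partial O$ meeting $S \cap \partial Q$ in exactly the arc $\alpha$, cuts $Q$ into two balls; I would analyze what $\alpha$ does to the planar surface $S \cap Q$ (a disk-with-holes whose boundary circles are the $\omega$'s). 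The arc $\alpha$ has both endpoints on $\partial(S \cap Q)$, so it either joins two distinct boundary circles $\partial\omega_1 \neq \partial\omega_2$ or is a loop based at one circle; in the loop case $\alpha$ separates $S \cap Q$, and in the non-loop case we can isotope/surger.

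The heart of the argument will be to derive a contradiction with the minimality of either $|\tau \cap S|$ or $|E \cap S|$. If $\alpha$ is not a loop, then $O$ is an $\partial$-compression of $S$ inside $Q$: pushing $S$ across $O$ either reduces $|E \cap S|$ (if the move can be realized keeping $E$ fixed, contradicting minimality of $|E\cap S|$) or merges two $t$-disks/$c$-disks, and in the case where one of $\omega_1, \omega_2$ is a $t$-disk this is the destabilization/isotopy move of Lemma \ref{claim:looped_disks} that reduces $|\tau \cap S|$. If $\alpha$ is a loop, then together with $\omega_1$ it bounds a subdisk of $S$ on one side (by an innermost-disk argument inside $Q$, as in the proof of Lemma \ref{claim:loopless_disks}), and cutting and pasting $E$ along this disk reduces $|E \cap S|$, again a contradiction. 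In every case a ball $Q$ forces one of the two minimality assumptions to fail, so $Q$ cannot be a ball.

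The main obstacle I expect is the case analysis of how $\partial O$ sits on $\partial Q$ relative to the various $t$-disks and $c$-disks: one must be careful that the surgery on $S$ (or on $E$) genuinely produces a new valid sphere $S'$ (still essential, still meeting $L_1$ in four $c$-disks and with $L_2, L_3$ outside) or a new meridian system $E'$, and that the count strictly drops; handling the subcase where $\partial O$ runs over a $c$-disk belonging to a genus-two component without accidentally changing the topological type of $S$ is the delicate point, and it is likely resolved by invoking essentiality of $S - L$ and the minimality of $|E \cap S|$ simultaneously.
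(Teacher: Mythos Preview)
There is a basic geometric error in your setup: the outermost disk $O$ is a subdisk of a meridian disk of the handlebody $H$, which is the \emph{exterior} of $G$, so $O$ does not lie in $Q\subset G$ at all. Its interior lies in a component of $H-S$ on the same side of $S$ as $Q$; the arc $\beta$ sits on $\partial Q\cap\partial G$ and $O$ is a $\partial$-compressing disk for $S-G$ inside $H$, not a disk inside the ball $Q$. Consequently the picture of ``$O$ cutting $Q$ into two balls'' and of ``$S\cap Q$ as a disk-with-holes'' is incorrect (in fact $S\cap Q$ is just the disjoint union of the disks $\omega_i$).

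More seriously, your plan does not close the main case. When $\omega_1\neq\omega_2$, Lemma~\ref{claim:looped_disks} already forces both $\omega_i$ to be $c$-disks, so no $t$-disk is involved and $|\tau\cap S|$ cannot drop. Your alternative, pushing $S$ across $O$, would band two $c$-disks together and lower $|S\cap L_1|$ below four, so the result is no longer an admissible sphere of the required type; the minimality hypotheses are taken only over such spheres, so this move yields no contradiction. The paper's argument here is different in kind: since a ball component of $G-S$ carries at most two $c$-disks, $\omega_1$ and $\omega_2$ are the \emph{only} $c$-disks of $Q$ and are joined in $Q$ by a single trivial arc of $L_1$, hence that arc is parallel to $\beta$ in $Q$ and then, via $O$, parallel in $E(L)$ to the arc $\alpha\subset S$. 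This forces $S$ to be boundary parallel in $E(L)$, contradicting the \emph{essentiality} of $S$ rather than any intersection count. Your outline never reaches essentiality of $S$ as the contradicted hypothesis in this branch, and without it the non-loop case does not go through. (In the loop case the paper also does more than a bare innermost argument on $S$: it first builds a disk $\Delta\subset\partial Q$ bounded by $\beta$ and a subarc of $\partial\omega_1$ with $|\Delta\cap L|\le 1$, again using the two-$c$-disk bound, and only then obtains a ball across which one can isotope to reduce $|E\cap S|$.)
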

\begin{proof}
	Suppose that $Q$ is a ball.
	If $\omega_1\neq \omega_2$, then $\omega_1$ and $\omega_2$ are $c$-disks, by Lemma \ref{claim:looped_disks}. Since $Q$ doesn't have more than two $c$-disks (see Figure \ref{figure:components}), $\omega_1$ and $\omega_2$ are connected by the arc $Q\cap L_1$, which is trivial in $Q$ and hence parallel to $\beta$. Hence, $Q\cap L_1$ being parallel to $\beta$ in $Q$ together with the disk $O$ being co-bounded by $\beta$ and $\alpha\subset S$, we have that $Q\cap L_1$ co-bounds a disk with $\alpha$ in the exterior of $S\cup L$. Then $S$ is boundary parallel in the exterior of $L$. This contradicts $S$ being essential in the exterior of $L$.\\ 
	Suppose now that $\omega_1=\omega_2$. Let $\Delta$ be a disk in $\partial Q$ with minimal intersection with $L$, bounded by $\beta$ and a subarc $\delta$ of $\omega_1$. Since $Q$ doesn't have more than two $c$-disks, $O\cup\Delta$ is a disk which intersects $L$ at most one time. Then $\alpha\cup\delta$ bounds a disk $A$ in $S$ containing at most one $c$-disk. By an isotopy on the ball bounded by $A\cup O\cup\Delta$, we can reduce $|E\cap S|$ (and possibly $|\tau\cap S|$), which contradicts their minimality. Note that if this ball contains an arc of $L_1$, then this arc is trivial in the ball and has one end in $\Delta$ and one end in $A$, so we can do isotopy in the exterior of $L$.
\end{proof}

By Lemma \ref{claim:not_ball}, $Q$ is either a solid torus or it has genus 2.\\

\noindent {\bf  Case 1:} Assume $Q$ is a solid torus.

\begin{lemma}\label{lemma:distinct c-disks}
$\omega_1$ and $\omega_2$ are distinct $c$-disks.
\end{lemma}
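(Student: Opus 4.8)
The plan is to argue by contradiction, ruling out both the possibility that $\omega_1 = \omega_2$ and the possibility that one of $\omega_1,\omega_2$ is a $t$-disk. First suppose $\omega_1 = \omega_2$. Then $\alpha$ is a loop based at a single disk $\omega_1$ of $G\cap S$ sitting in the solid torus $Q$. Since $O$ is an outermost disk disjoint from $S$, the arc $\alpha$ together with $\omega_1$ bounds a disk region; I would look at the arc $\beta = \partial O - \alpha$ lying in $\partial Q$ and the subarc $\delta$ of $\partial\omega_1$ so that $\beta\cup\delta$ bounds a disk $\Delta$ in $\partial Q$ meeting $L$ minimally, exactly as in the proof of Lemma \ref{claim:not_ball}. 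The point is that in a solid torus $Q$ whose $c$-disks (the meridional $L_1$-disks) are parallel, an outermost loop disk $O$ glued to $\Delta$ either produces a compressing disk that lets one reduce $|E\cap S|$ by an isotopy across the resulting ball — contradicting minimality — or forces $\alpha$ to be essential on $S-L$ in a way incompatible with the earlier setup. So $\omega_1 \neq \omega_2$.

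Next I must show neither $\omega_1$ nor $\omega_2$ is a $t$-disk. By Lemma \ref{claim:looped_disks}, an outermost arc at a $t$-disk is a loop; but here $\alpha$ is outermost in $E$ (its $O$ is disjoint from $S$) and, by the previous paragraph, $\omega_1\neq\omega_2$, so $\alpha$ is not a loop. If say $\omega_1$ were a $t$-disk, then among the arcs of $E\cap S$ with an end on $\omega_1$, an outermost one would be a loop by Lemma \ref{claim:looped_disks}; one then has to promote this to a contradiction with the minimality of $|\tau\cap S|$ via the destabilization argument already used in Lemma \ref{claim:looped_disks} — cutting $G\cup N(\alpha)$ along $\omega_1$ when $\partial O$ meets $\omega_1$ exactly once. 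Since $\alpha$ itself already meets $\omega_1$ exactly once (as $\omega_1\neq\omega_2$), that destabilization applies directly to $\alpha$, reducing $|\tau\cap S|$ and contradicting minimality. Hence both $\omega_1$ and $\omega_2$ are $c$-disks, and being distinct, the lemma follows.

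The main obstacle I anticipate is the $\omega_1=\omega_2$ case: one must carefully track how many $c$-disks the solid torus $Q$ can contain (from Figure \ref{figure:components}, at most two, and they are parallel) and verify that the disk $O\cup\Delta$ really does give a compression or boundary-parallelism that contradicts either the minimality of $|E\cap S|$ (and possibly $|\tau\cap S|$) or the essentiality of $S$ in $E(L)$ — handling, as in Lemma \ref{claim:not_ball}, the subcase where the ball enclosed by $A\cup O\cup\Delta$ meets $L_1$ in a trivial arc so that the isotopy can be performed in the exterior of $L$. The $t$-disk exclusion, by contrast, should be a short reprise of the destabilization move from Lemma \ref{claim:looped_disks}.
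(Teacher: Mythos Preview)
Your handling of the $t$-disk exclusion (the second paragraph) is fine and matches the paper: once $\omega_1\neq\omega_2$, the global outermost arc $\alpha$ is in particular outermost among arcs ending at $\omega_1$, so Lemma~\ref{claim:looped_disks} forces both ends on $c$-disks.

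The gap is in your treatment of the case $\omega_1=\omega_2$. You propose to rerun the argument of Lemma~\ref{claim:not_ball}: take $\delta\subset\partial\omega_1$ and a disk $\Delta\subset\partial Q$ bounded by $\beta\cup\delta$, then use $O\cup\Delta$ to reduce $|E\cap S|$. But that argument relied on $Q$ being a ball, so that $\partial Q$ is a sphere and \emph{any} simple closed curve on it bounds a disk. Here $Q$ is a solid torus, $\partial Q$ is a torus, and the curve $\beta\cup\delta$ may very well be essential on $\partial Q$ --- in which case no such $\Delta$ exists and your reduction never gets started. Your fallback clause (``forces $\alpha$ to be essential on $S-L$ in a way incompatible with the earlier setup'') is too vague to cover this.

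The paper's proof takes a different route that exploits precisely what is special about a solid torus component of $G-S$: since $G$ has genus three and $S$ misses $L_2\cup L_3$, any solid torus piece $Q$ must contain $L_2$ or $L_3$, say $L_2$. One then takes a disk $A$ in $S$ cut off by $\alpha\cup\omega_1$ and observes that $A\cup O$ is a properly embedded disk in the \emph{exterior} of $Q$. Since $S^3$ has no lens space or $S^2\times S^1$ summand, its boundary is forced to be parallel to $L_2$; but $A\cup O$ is disjoint from $L_3$, contradicting that $L_2$ and $L_3$ are linked. The linking of $L_2$ and $L_3$ is the missing ingredient in your plan.
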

\begin{proof}Suppose that $\omega_1=\omega_2$. As $G$ has genus three and $S$ is disjoint from $L_2\cup L_3$, a solid torus component of $G-S$ contains either $L_2$ or $L_3$. Without loss of generality, suppose that $L_2$ is in $Q$. Let $A$ be a disk in $S$ cut by $\alpha\cup \omega_1$. The disk $A\cup O$ is properly embedded in the exterior of the solid torus $Q$. As $S^3$ has no lens space or $S^2\times S^1$ summand we have that  $A\cup O$ has boundary parallel to $L_2$. But as $A\cup O$ is disjoint from $L_3$, we have a contradiction to $L_2$ being linked with $L_3$. Hence, $\omega_1\neq\omega_2$ and, by Lemma \ref{claim:looped_disks}, $\omega_1$ and $\omega_2$ are $c$-disks.
\end{proof}

In this situation, the $c$-disks in $G$ are either all parallel, or only three are parallel, or consist of two pairs of two parallel disks in $G$, and all outermost arcs have, say, one end in $c_1$ and one end in $c_4$, as illustrated in Figure \ref{figure:case1b}. 

\begin{figure}[ht]
	\includegraphics[scale=0.046]{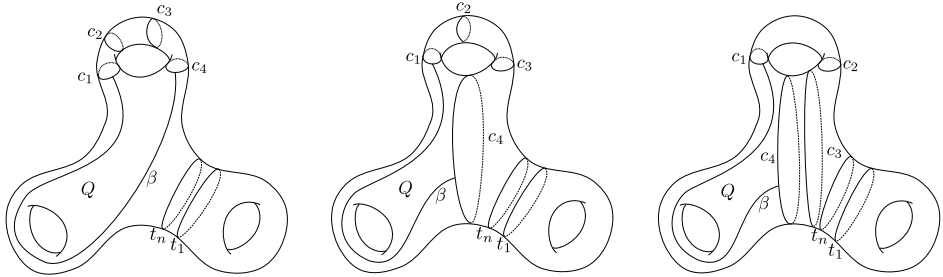}
	\caption{The disks of intersection of $G$ and $S$ when $Q$ is a solid torus.}	
	\label{figure:case1b}
\end{figure}

\begin{lemma}\label{disk_c2143}
An outermost arc among those with an end at $c_3$ is also an outermost arc among those with an end at $c_2$.
	\end{lemma}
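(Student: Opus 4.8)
The plan is to use the same destabilization mechanism as in Lemma~\ref{claim:looped_disks}, now exploiting the specific local picture of Figure~\ref{figure:case1b}: when $Q$ is a solid torus, the outermost arcs all run between $c_1$ and $c_4$, so the $c$-disks $c_2$ and $c_3$ sit ``in the middle'' and their incident arcs are forced to behave coherently. First I would fix an outermost arc $\alpha$ among those with an end at $c_3$, with outermost subdisk $O\subset E$ disjoint from $S$ and $\beta=\partial O-\alpha$ lying on $\partial G$ in a component $Q'$ of $G-S$. By Lemma~\ref{claim:not_ball}, $Q'$ is a solid torus or has genus $2$; in either case I would track which $c$-disks bound $Q'$. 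Since $\alpha$ has an end at $c_3$ and, by the structure in Figure~\ref{figure:case1b}, $c_3$ is parallel in $G$ to $c_2$ (they are one of the pairs, or all four are parallel, or $c_3$ is among the triple together with $c_2$), the component $Q'$ must also contain a piece of $L_1$ joining $c_3$ to one of the remaining $c$-disks.

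The key step is to argue that the other end $\omega_2$ of $\alpha$ cannot be $c_1$ or $c_4$ and cannot be a $t$-disk, so that $\omega_2$ must be $c_2$. If $\omega_2$ were a $t$-disk, then by Lemma~\ref{claim:looped_disks} the outermost arc at that $t$-disk would be a loop, but $\alpha$ is not a loop (its ends are at distinct disks), and I would derive from the outermost position of $\alpha$ together with the destabilization move in Lemma~\ref{claim:looped_disks} a reduction of $|\tau\cap S|$ — precisely, $\partial O$ meets the $t$-disk $\omega_2$ once, so $\partial\omega_2$ is primitive in $H-N(\alpha)$ and we destabilize, contradicting minimality. If $\omega_2$ were $c_1$ (or $c_4$), then $\alpha$ would be an outermost arc running from $c_3$ to $c_1$; but by the description following Lemma~\ref{lemma:distinct c-disks}, every outermost arc (in the solid-torus case) has one end in $c_1$ and one end in $c_4$, hence no outermost arc joins $c_3$ to $c_1$, and after cutting off $O$ one would produce an outermost arc of the remaining configuration violating that description — I would phrase this as: removing $O$ and reading off the new outermost arcs contradicts either the $c_1$–$c_4$ pattern or the minimality of $|E\cap S|$. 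This pins down $\omega_2=c_2$, so $\alpha$ is an outermost arc from $c_3$ to $c_2$.

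Finally I would run the symmetric argument: starting instead from an outermost arc among those with an end at $c_2$, the same case analysis (it cannot end at a $t$-disk by Lemma~\ref{claim:looped_disks} and the destabilization, and cannot end at $c_1$ or $c_4$ by the $c_1$–$c_4$ pattern of outermost arcs) forces its other end to be $c_3$. Hence the outermost arcs at $c_2$ and the outermost arcs at $c_3$ are the very same arcs, which is the assertion. The main obstacle I anticipate is the bookkeeping in the middle step — carefully justifying that ``cutting along $O$'' does not destroy the combinatorial features (the $c_1$–$c_4$ outermost pattern, the parallelism classes of the $c$-disks in $G$) used to exclude the unwanted endpoints, and making sure the destabilization move is legitimately available, i.e.\ that $O$ is genuinely disjoint from $S$ and that the relevant boundary curve is primitive in the complement after adding $N(\alpha)$; this is where I would spend most of the care, closely mirroring the proofs of Lemmas~\ref{claim:looped_disks} and~\ref{claim:not_ball}.
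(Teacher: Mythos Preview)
There is a genuine gap, and it stems from a misreading of the phrase ``outermost among those with an end at $c_3$''. This does \emph{not} mean that the subdisk $O\subset E$ cut off by $\alpha$ is disjoint from $S$; it only means that no arc of $E\cap S$ lying in the interior of $O$ has an end at $c_3$. In general $O$ will contain many arcs of $E\cap S$ (with ends at $t$-disks or at $c_1,c_2,c_4$), so $\beta=\partial O-\alpha$ crosses $S$ repeatedly and does not sit in a single component $Q'$ of $G-S$. Consequently your appeal to Lemma~\ref{claim:not_ball} is illegitimate, and the destabilization move you propose when $\omega_2$ is a $t$-disk breaks down: since arcs in the interior of $O$ may well have ends at $\omega_2$, the boundary $\partial O$ can meet $\partial\omega_2$ many times, and $\partial\omega_2$ need not be primitive in $H-N(\alpha)$. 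Likewise, your exclusion of $\omega_2\in\{c_1,c_4\}$ relies on the statement that ``every outermost arc has one end in $c_1$ and one end in $c_4$'', but that statement is about \emph{globally} outermost arcs of $E\cap S$, and $\alpha$ is not one of those. Finally, even granting that the other end of $\alpha$ is $c_2$, you have not argued that no arc in the interior of $O$ has an end at $c_2$, which is exactly what ``$\alpha$ is outermost among arcs with an end at $c_2$'' requires.

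The paper's argument is organised around precisely this point: it analyses the arcs that \emph{do} lie in $O$. First it shows that no arc in $O$ (including $\alpha$) can have an end at a $t$-disk, because by Lemma~\ref{claim:looped_disks} such an arc would force a loop whose outermost subdisk, via Lemma~\ref{lemma:distinct c-disks} and the fact that $c_3$ is parallel in $G$ to $c_1$ or $c_4$, would have to meet $c_3$ --- contradicting the outermost choice of $\alpha$ at $c_3$. Then it uses the component structure of $\Gamma_c$ from Lemma~\ref{claim:loopless_disks} (with $c_1,c_4$ already joined) to see that any arc in $O$ touching $c_2$ would have to go to $c_3$, again contradicting outermostness. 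Only after these two steps does one know that all interior arcs of $O$ have ends in $\{c_1,c_4\}$, and a final adjacency observation in $G$ pins the other end of $\alpha$ to $c_2$. You should restructure your argument along these lines: the work is in controlling the interior of $O$, not in pretending it is empty.
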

	\begin{proof}
		Consider an outermost arc $\alpha$ of $E\cap S$ among those with an end at $c_3$,  the corresponding arc $\beta$ of $\partial E$ and the corresponding disk $O$ in $E$. If there is some arc with an end at a $t$-disk  in $O$ (including $\alpha$), then there is an outermost one, which is a loop, by Lemma \ref{claim:looped_disks}. Therefore, in its corresponding outermost disk $\Delta$, there is at least an outermost arc which, by Lemma \ref{lemma:distinct c-disks}, has ends in distinct $c$-disks, assumed to be $c_1$ and $c_4$, and, as $c_3$ is parallel to $c_1$ or $c_4$ in $G$, we have that $\Delta$ intersects $c_3$, which implies that $c_3$ is in the interior of $\beta$. This contradicts $\alpha$ being outermost among arcs of $E\cap S$ with an end at $c_3$.
		
		 Hence, $\alpha$ has the other end in a $c$-disk, and $\beta$ meets $S$ only at $c$-disks. Since $c_1$ and $c_4$ are in the same component of $\Gamma_c$, then, by Lemma \ref{claim:loopless_disks}, at least one of $c_2$, $c_3$ has no loops and is in a different component of $\Gamma_c$. Then, either $c_3$ is in the same component of $c_1\cup c_4$, or it is in the other component with $c_2$.
		Suppose, by contradiction, that in the interior of $O$ there is an arc $\alpha_2$ with an end at $c_2$.  As there are no $t$-disks in the interior of $O$, the other end of $\alpha_2$ is a $c$-disk. Therefore, $c_2$ and $c_3$ are  in the same component of $\Gamma_c$. Then, in $\Gamma_c$, $c_2$ and  $c_3$ can only be connected to each other, and the other end of $\alpha_2$ is $c_3$, a contradiction with $\alpha$ being outermost.

		Therefore, in the interior of $O$ all arcs have ends at $c_1$ or $c_4$. A intersection consecutive to a $c_1$ is a $c_2$ so it must be the other end of $\alpha$.
\end{proof}

Suppose that the $c$-disks are parallel two by two in $G$. As $Q$ is outside $S$, the ball $B$ contains the two 1-handles cut by $S$ from $G$, connecting $c_1$ to $c_2$ and $c_3$ to $c_4$, $c_{1;2}$ and $c_{3;4}$ respectively. In the graph $\Gamma$, $c_1$ is connected to $c_4$ by $\alpha$, and, from Lemma \ref{disk_c2143}, $c_2$ is connected to $c_3$. As $L_2$ and $L_3$ are in the same side of $S$ and for the $c$-disks being parallel two by two in $G$, the number $n$ of $t$-disks is even. By Lemma \ref{claim:looped_disks}, the outermost arc among those with ends in a specific $t$-disk has both ends in the same disk. At least a sequence of these arcs is parallel in $E$. (See Figure \ref{figure:outermost_tn}.) Using the disks making these arcs parallel in $E$, and considering an outermost one in $B$, we have that either there is an essential torus in the exterior of $G$, a contradiction with it being a handlebody; or a 1-handle in $G$ cut by two consecutive $t$-disks is parallel to $S$ in the exterior of $G$, and we can reduce $|S\cap \tau|$, contradicting its minimality. (See also the argument for the proof of Lemma \ref{lemma:same t-disk}.)

Suppose now that at least three $c$-disks are parallel in $G$. Consider an arc $\gamma$ of $E\cap S$ with an end at $c_3$. In at least one of the disks $O$ cut off by $\gamma$ there is an arc with an end at a $t$-disk, which implies that there is another arc with an end at $c_3$. Since the number of arcs with an end at $c_3$ is finite, we can assume that every arc in $O$ with an end at $c_3$ is outermost. By Lemma \ref{disk_c2143}, every such arc has the other end at $c_2$. Therefore, there is a disk $\Delta$ cut off from $O$ by a sequence of arcs with ends at $c_2$ and $c_3$, as in Figure \ref{figure:Scharlemann}. Let $c_{2;3}$ be the component of $G-S$ whose boundary contains $c_2\cup c_3$, which is a ball, and let $B$ be the ball cut from $S$ not containing $c_{2;3}$. As $c_3$ is a meridian of the solid torus obtained by gluing $B$ and $c_{2;3}$ along $c_2$ and $c_3$ and $\Delta$ crosses $c_3$ more than once always in the same direction (such a disk is known as a Scharlemann cycle), we obtain a contradiction with $S^3$ having no lens space summands.

\begin{figure}[ht]
	\includegraphics[scale=0.046]{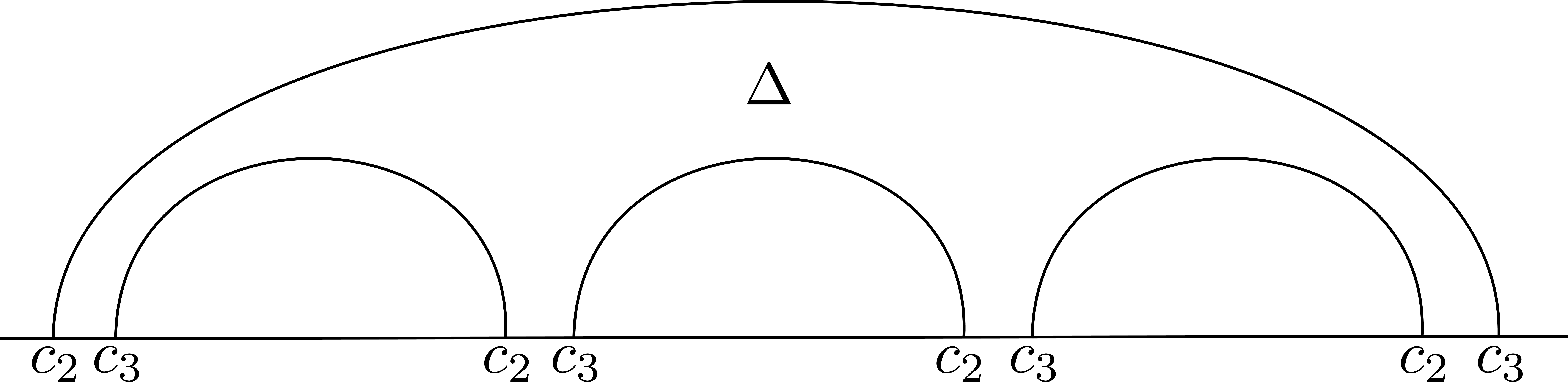}
	\caption{A Scharlemann cycle $\Delta$.}	
	\label{figure:Scharlemann}
\end{figure}

\

\noindent {\bf  Case 2:} Assume $Q$ has genus 2.

\begin{lemma} \label{lemma:same t-disk}
$\omega_1$ and $\omega_2$ are the same $t$-disk.
\end{lemma}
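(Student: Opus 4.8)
The plan is to eliminate all possibilities for $\{\omega_1,\omega_2\}$ except ``$\omega_1=\omega_2$ is a $t$-disk.'' The first step is immediate: since $\alpha$ is outermost in $E$, the disk $O$ is disjoint from $S$, so $\alpha$ is in particular outermost among the arcs of $E\cap S$ having an end at $\omega_1$, and also among those having an end at $\omega_2$; hence by Lemma~\ref{claim:looped_disks}, if either $\omega_1$ or $\omega_2$ is a $t$-disk then $\alpha$ is a loop and $\omega_1=\omega_2$ equals that $t$-disk, which is exactly the statement. So everything reduces to showing that $\omega_1$ and $\omega_2$ cannot both be $c$-disks.

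Suppose then that $\omega_1$ and $\omega_2$ are $c$-disks. Because $Q$ has genus $2$, the description of the pieces of $G-S$ given above (balls, solid tori, and at most one genus-two handlebody, with $N(L_2)$ and $N(L_3)$ disjoint from $S$ and hence each contained in a piece of genus $\ge 1$) forces $Q$ to contain $L_2$ or $L_3$, and then---as in Figure~\ref{figure:components}---the $c$-disks on $\partial Q$ are mutually parallel in $G$; say $L_2\subset Q$. From the bigon $O$ (with $\partial O=\alpha\cup\beta$, $\alpha\subset S$, $\beta\subset\partial Q$) together with a disk $A\subset S$ bounded by $\alpha$ and subarcs of $\partial\omega_1\cup\partial\omega_2$, chosen so that $A$ meets $G$ in as few disks as possible, I would form the disk $D=A\cup O$ and push it off the $c$- and $t$-disks interior to $A$, obtaining a disk properly embedded in the exterior of $Q$ with $\partial D\subset\partial Q$.

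The contradiction is then extracted from how $\partial D$ sits on $\partial Q$. If $L_3\not\subset Q$, then $D$ can be taken disjoint from $L_3$; since $S^3$ has no lens-space or $S^2\times S^1$ summand, $\partial D$ is not a meridian of a non-trivially embedded solid torus in the exterior of $Q$, and, using $L_2\subset Q$ together with $D\cap L_3=\emptyset$, one is forced to conclude that $D$ is boundary-parallel in the exterior of $Q$. Tracing that parallelism through $A$ and $O$ then yields either an isotopy pushing a neighbourhood of $\omega_1$, and of the $t$-disks interior to $A$, across $S$---which lowers $|E\cap S|$ without raising $|\tau\cap S|$, against minimality---or shows $S$ to be boundary-parallel in $E(L)$, or $L_2$ to be split from $L_3$, all absurd. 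If instead $L_3\subset Q$, so that $Q$ swallows both $L_2$ and $L_3$, a first-Betti-number count shows $Q$ can contain at most one arc of $L_1$, which severely restricts the positions of $\omega_1$ and $\omega_2$ on $\partial Q$; in this case I would again play $D=A\cup O$ off against the essentiality of the punctured sphere $S$ in $E(L)$ and against the non-triviality of the companion $K$---equivalently, against the essential torus $\partial N(K)$ in $E(L)$---to produce either a compression of an incompressible surface, or an essential torus inside the genus-$3$ handlebody $H$, or a further reduction of $|\tau\cap S|$, each impossible. Having reached a contradiction in every case, $\omega_1$ and $\omega_2$ are not both $c$-disks, and the first step completes the lemma.

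The main obstacle I anticipate is precisely the case $L_3\subset Q$---when a single piece of $G-S$ absorbs two of the three components of $L$ together with $c$-disks. There the exterior of $Q$ may well have compressible boundary (for instance if $L_2\cup L_3$ reads off as a Hopf link), so the ``no lens-space summand'' device used in the other case no longer applies, and one is forced to exploit that the companion $K$ is non-trivial, i.e. that $\partial N(K)$ is an essential torus in $E(L)$, to build an essential torus inside the handlebody $H$. Producing that torus cleanly---and checking that the isotopies used along the way do not disturb either of the minimality assumptions on $|E\cap S|$ and $|\tau\cap S|$---is the technical heart of the argument.
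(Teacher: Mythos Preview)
Your opening reduction via Lemma~\ref{claim:looped_disks} is correct, but everything after it is far harder than necessary and, as you yourself flag, is left unfinished in the decisive case $L_2,L_3\subset Q$. The paper's argument never touches the exterior of $Q$, never builds the disk $A\cup O$, and never invokes lens-space or essential-torus considerations; it is a three-line count inside $G$.

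Here is the idea you are missing. Look only at the frontier of $Q$ in $G$, i.e.\ the disks of $G\cap S$ lying on $\partial Q$; there are $k$ of them, and $\omega_1,\omega_2$ are among them since $\beta\subset\partial Q$. From $\chi(G)=-2$ and $\chi(Q)=-1$ one gets $\chi(\overline{G\setminus Q})=k-1$, and since every disk of $G\cap S$ is essential in $G$ (each is a meridian of $L_1$ or of $\tau$), no component of $\overline{G\setminus Q}$ can be a ball meeting $Q$ along a single disk; this forces $k\le 2$. If $k=1$, the unique frontier disk separates $G$ into $Q$ and a solid torus, hence is separating in $G$; a $c$-disk is a meridian of the circle $L_1$ and is non-separating, so the frontier disk---and therefore $\omega_1=\omega_2$---is a $t$-disk. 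If $k=2$, then $\overline{G\setminus Q}$ is a ball, i.e.\ a $1$-handle joining the two frontier disks, and every remaining disk of $G\cap S$ sits in this $1$-handle; thus all disks of $G\cap S$ are mutually parallel in $G$. But a $c$-disk meets $L$ in one point while a $t$-disk is disjoint from $L$, so the two types cannot be parallel; hence there are no $t$-disks at all, contradicting Lemma~\ref{tau and S}.

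Incidentally, this same count shows that your sub-case ``$L_2\subset Q$, $L_3\not\subset Q$'' is empty once you assume $\omega_1,\omega_2$ are $c$-disks: the frontier then has two disks, $\overline{G\setminus Q}$ is a ball, and the core circle $L_3$ cannot sit in a ball piece of $G$. So all of your effort is directed at the single case you leave open, and that case is dispatched instantly by the parallel-disk contradiction above.
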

\begin{proof}
	Since $Q$ has genus two and $G$ has genus three, $Q$ is cut from $G$ by one or two disks. If $Q$ is cut by one disk, it must be a $t$-disk, since no $c$-disk is separating in $G$. Therefore, $\omega_1$ and $\omega_2$ are the same $t$-disk. If $Q$ is cut from $G$ by two disks, then all disks of $G\cap S$ are parallel in $G$. As a $t$-disk cannot be parallel to a $c$-disk, we have that there is no $t$-disk in $G\cap S$, which contradicts Lemma \ref{tau and S}.
\end{proof}

Let $t_1, t_2, \ldots, t_n$ be the parallel $t$-disks in $G$ numbered in consecutive order with both ends of $\beta$ at $t_1$, as in Figure \ref{figure:case1a}.

\begin{figure}[ht]
	\includegraphics[scale=0.046]{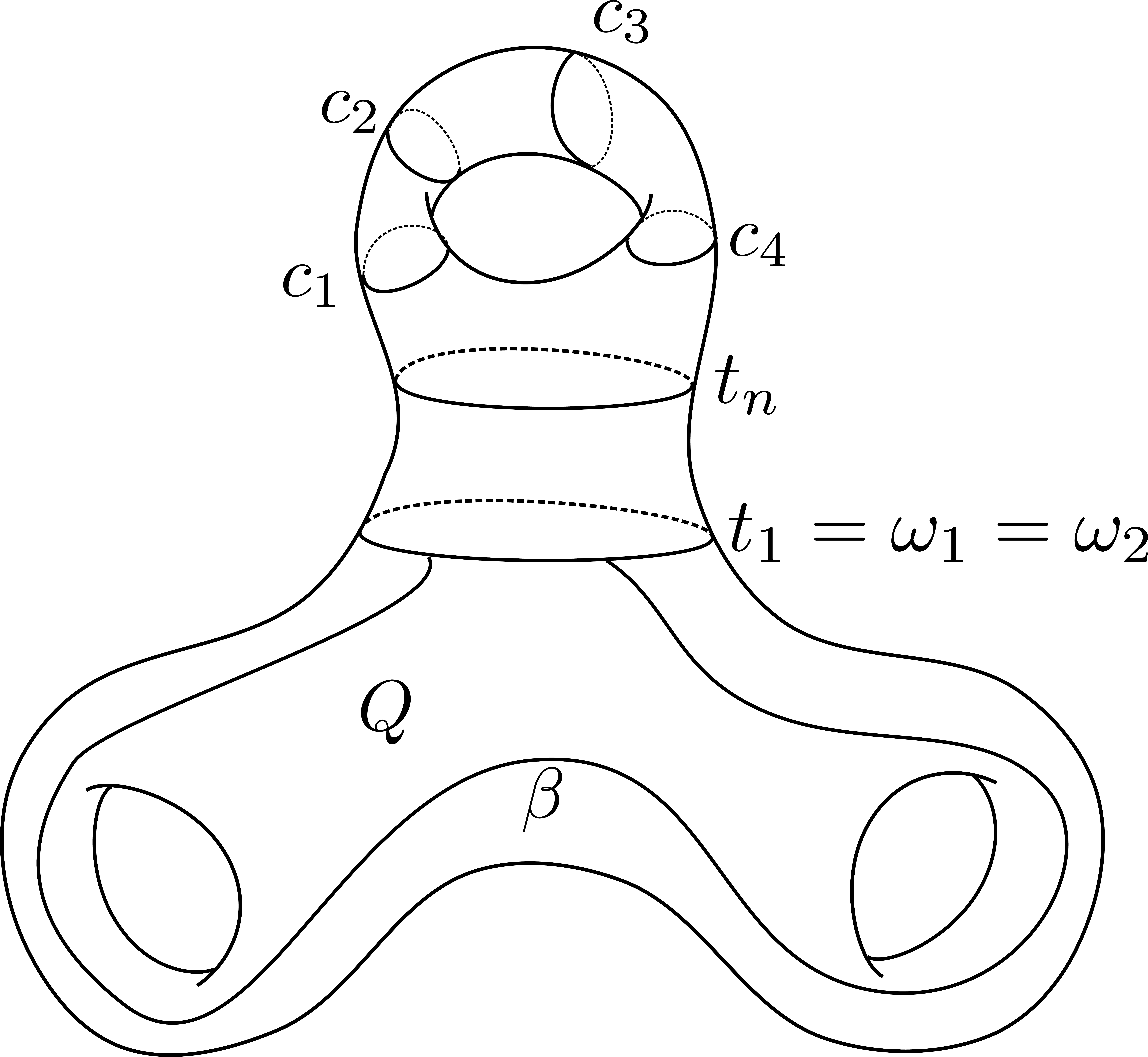}
	\caption{The disks of intersection of $G$ and $S$ when $Q$ has genus 2.}	
	\label{figure:case1a}
\end{figure}

From the number of components $|E\cap S|$ being finite, there is a sequence of arcs of $E\cap S$ in $E$, as in Figure \ref{figure:outermost_tn}, denoted by $\alpha_i$ when it has both ends in $t_i$.\\

\begin{figure}[ht]
	\includegraphics[scale=0.046]{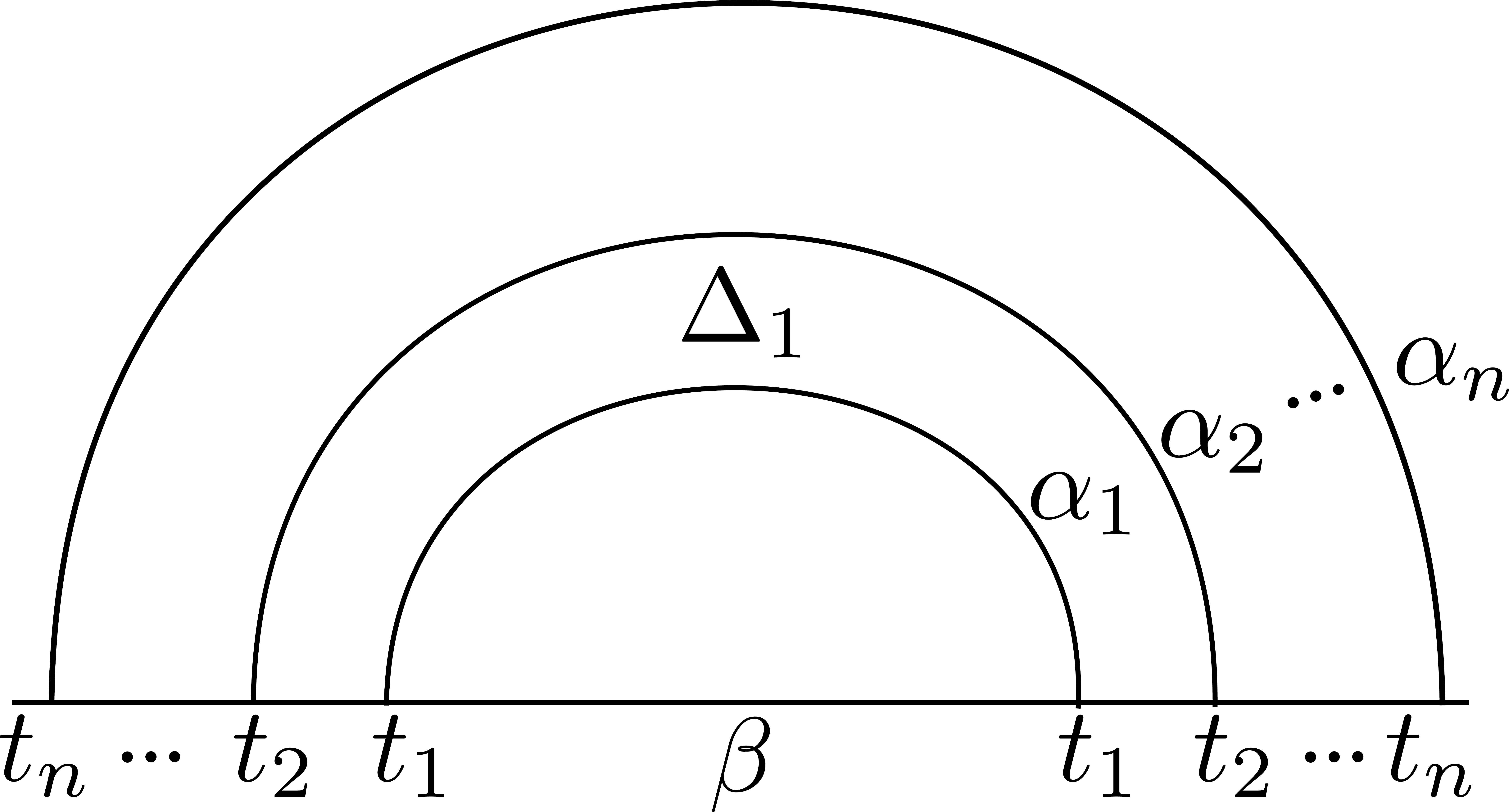}\qquad\qquad\qquad 
	\includegraphics[scale=0.1]{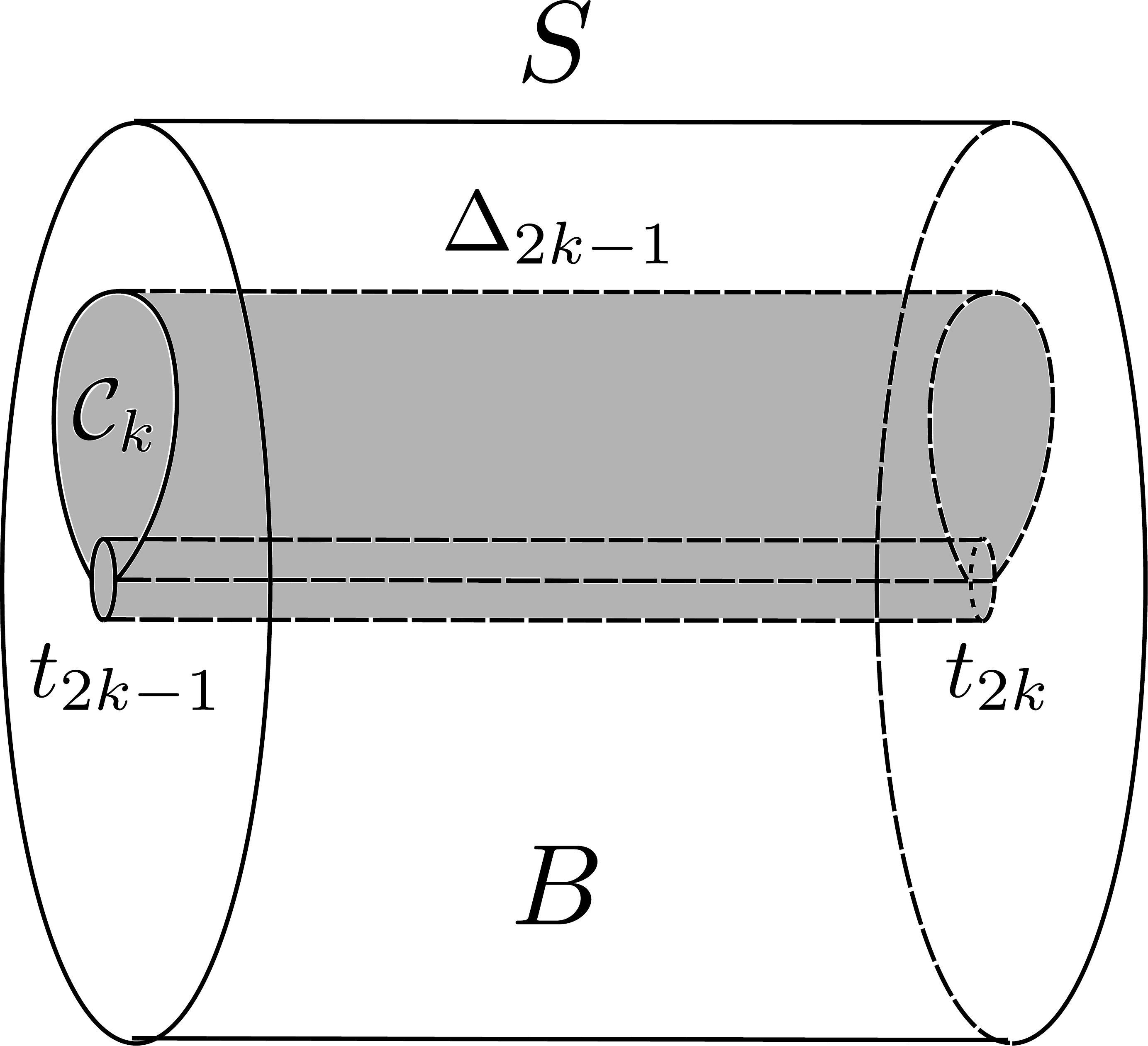}
	\caption{A sequence of arcs $\alpha_1,\ldots,\alpha_n$ of $E\cap S$ in $E$ (left); The $1$-handle ${\mathcal C}_k$ (right) .
	}	
	\label{figure:outermost_tn}
\end{figure}

\begin{lemma}\label{lemma:odd}
The number $n$ of $t$-disks in $G$ is odd.
\end{lemma}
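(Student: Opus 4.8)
The plan is to argue by contradiction, along the lines of the parity argument already used in Case~1 when the $c$-disks are parallel two-by-two. Assume $n$ is even. Since the $t$-disks $t_1,\dots,t_n$ are parallel and consecutive in $G$, cutting $G$ along all of them leaves the genus-two piece $Q$ at one end --- it contains the outermost disk $O$, which is disjoint from $S$ --- then a chain of $1$-handles ${\mathcal C}_1,\dots,{\mathcal C}_{n-1}$, with ${\mathcal C}_k$ lying between $t_k$ and $t_{k+1}$, and finally a ball ${\mathcal C}_n$ beyond $t_n$. The nested arcs $\alpha_1,\dots,\alpha_n$ of Figure~\ref{figure:outermost_tn} cut $E$ into $O$, rectangular bands $R_1,\dots,R_{n-1}$ with $R_k$ running alongside ${\mathcal C}_k$, and one last region beyond $\alpha_n$ adjacent to ${\mathcal C}_n$.

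Next I would exploit that $S$ separates $S^3$, so the planar surface $P=S\cap H$ separates $H$ into its parts inside and outside $S$. As $Q$ contains $L_2$ or $L_3$ it lies outside $S$, so $O$ lies in the outside part of $H$; and crossing an $\alpha_k$ switches the side of $P$, so $R_k$ lies on the same side of $S$ as ${\mathcal C}_k$, the sides alternating with ${\mathcal C}_1$ inside $S$. The parity of $n$ is precisely what decides whether the region of $E$ beyond $\alpha_n$ lands on the same side of $S$ as $O$: when $n$ is even it does, both being in the outside part of $H$, and this is the configuration the rest of the argument needs.

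Then, for each odd $k$, the $1$-handle ${\mathcal C}_k$, the band $R_k$, and the sub-disk of $S$ bounded off near $t_k\cup t_{k+1}$ enclose a solid-torus-or-ball region inside $S$, and these regions are nested along $E$ inside $S$. Taking an innermost such region and running the dichotomy used in Case~1 (``considering an outermost one in $B$''), I expect to obtain either an essential torus in the handlebody $H$, which is impossible, or a product region showing ${\mathcal C}_k$ parallel to $S$ in $H$ away from $L$; in the latter case an isotopy of $\tau$ across $S$ deletes $t_k$ and $t_{k+1}$ and lowers $|\tau\cap S|$, contradicting its minimality. Either way the assumption that $n$ is even is untenable, so $n$ is odd.

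The main obstacle is this last dichotomy: one has to check that the innermost band--handle region really is a product region of $S$ (or carries an essential torus), and that the $c$-disks lying on $S$ do not block the resulting isotopy of $\tau$. This requires the same careful local analysis of $\partial G$ near the $t$- and $c$-disks as in the proofs of Lemmas~\ref{claim:not_ball}, \ref{lemma:same t-disk} and \ref{disk_c2143}, together with the standard appeal to $S^3$ having no lens-space or $S^2\times S^1$ summand to rule out a knotted or linked core of the region.
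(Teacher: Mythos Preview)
Your overall strategy matches the paper's: pair up consecutive $t$-disks via the nested loops $\alpha_1,\dots,\alpha_n$, build from each pair a $1$-handle region $\mathcal{C}_k$ inside $B$, and derive a contradiction. The gap is in your final dichotomy, which you correctly flag as the main obstacle but do not resolve --- and the actual resolution is not the one you anticipate.

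You expect the innermost region to be either a product (allowing an isotopy of $\tau$ across $S$ to drop $|\tau\cap S|$) or to carry an essential torus. But the $c$-disks genuinely obstruct the product option: the paper first shows that both disks of $\mathcal{C}_k\cap S$ must meet $L_1$ (otherwise $|E\cap S|$ could be reduced), so every $\mathcal{C}_k$ contains at least one arc of $L_1\cap B$, and $\tau$ cannot simply be isotoped through it. The paper then splits on whether some $\mathcal{C}_k$ contains \emph{both} arcs of $L_1\cap B$. If so, those $\mathcal{C}_k$'s are nested in $B$ and tubing along an outermost one yields a torus in $H$ that is either trivial (reducing $|G\cap S|$) or essential --- this half is close to your picture. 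If instead every $\mathcal{C}_k$ contains exactly one arc of $L_1\cap B$, there is no innermost/product move available; the key extra idea is that $L_1$ is unknotted, so that single arc is trivial in $\mathcal{C}_k$ and hence parallel to the $1$-handle $t_{2k-1;2k}$. Consequently all the $1$-handles of $G$ inside $S$ run parallel to arcs of $L_1$ with the knotted pattern $K_0$, and this globally forces an essential torus in $H$. That one-arc case --- using the unknottedness of $L_1$ against the non-triviality of $K_0$ --- is the missing ingredient in your sketch.
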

\begin{proof}	
Suppose $n$ is even. Let $\Delta_{2k-1}$ be the disk cut by $\alpha_{2k-1}\cup \alpha_{2k}$ from $E$, and $t_{2k-1; 2k}$ the $1$-handle cut by $t_{2k-1}\cup t_{2k}$ from $G$. Then, the 1-handle that $t_{2k-1; 2k}\cup \Delta_{2k-1}$ cuts from $B$ together with $t_{2k-1; 2k}$ defines a 1-handle ${\mathcal C}_k$  in $B$, which  intersects $S$ in two disks. Both disks intersect $L_1$, because otherwise we could reduce $|E\cap S|$. Suppose that one ${\mathcal C}_k$ contains both arcs of $L_1$ inside $S$. Then, all ${\mathcal C}_k$'s with this property are nested in $B$, as all ${\mathcal C}_k$'s intersect $L_1$ and their annuli in the interior of $B$ are pairwise disjoint (See Figure \ref{figure:Cknested}.)

\begin{figure}[ht]
	\includegraphics[scale=0.15]{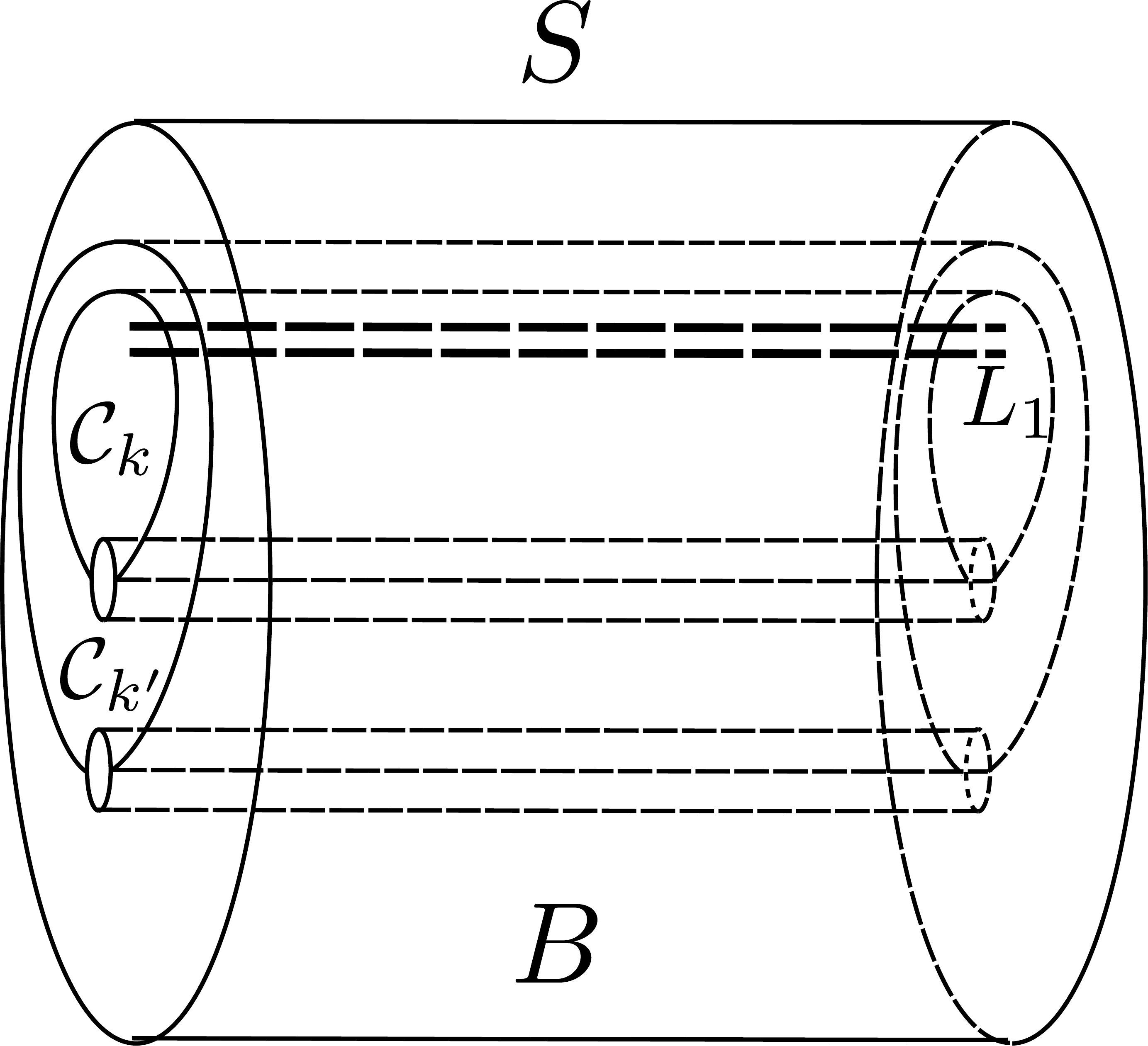}
	\caption{The nested 1-handles  ${\mathcal C}_k$'s.}	
	\label{figure:Cknested}
\end{figure}

By tubing along an outermost one inside $S$, we obtain a torus in $H$. Either this torus is trivial in $B$ and we can reduce $|G\cap S|$, a contradiction, or it is an essential torus and, therefore, there is an essential torus in $H$, a contradiction. Therefore, every ${\mathcal C}_k$ contains only one arc of $L_1$. Since $L_1$ is unknotted, this arc is trivial in ${\mathcal C}_k$ and parallel to $t_{2k-1; 2k}$. Then, inside $S$, $G$ is defined by a collection of parallel 1-handles with the pattern of $K_0$, and therefore it contains an essential torus in $H$, a contradiction. Hence, $n$ is odd.
\end{proof}

Note that the strings from $c_1$ to $c_4$ and $c_2$ to $c_3$ are inside of $S$ and are knotted. Let $\gamma$ be an outermost arc of $E\cap S$ in $E$ after the arcs $\alpha_n$. (See Figure \ref{figure:S2} (left).)

\begin{figure}[ht]
	\includegraphics[scale=0.06]{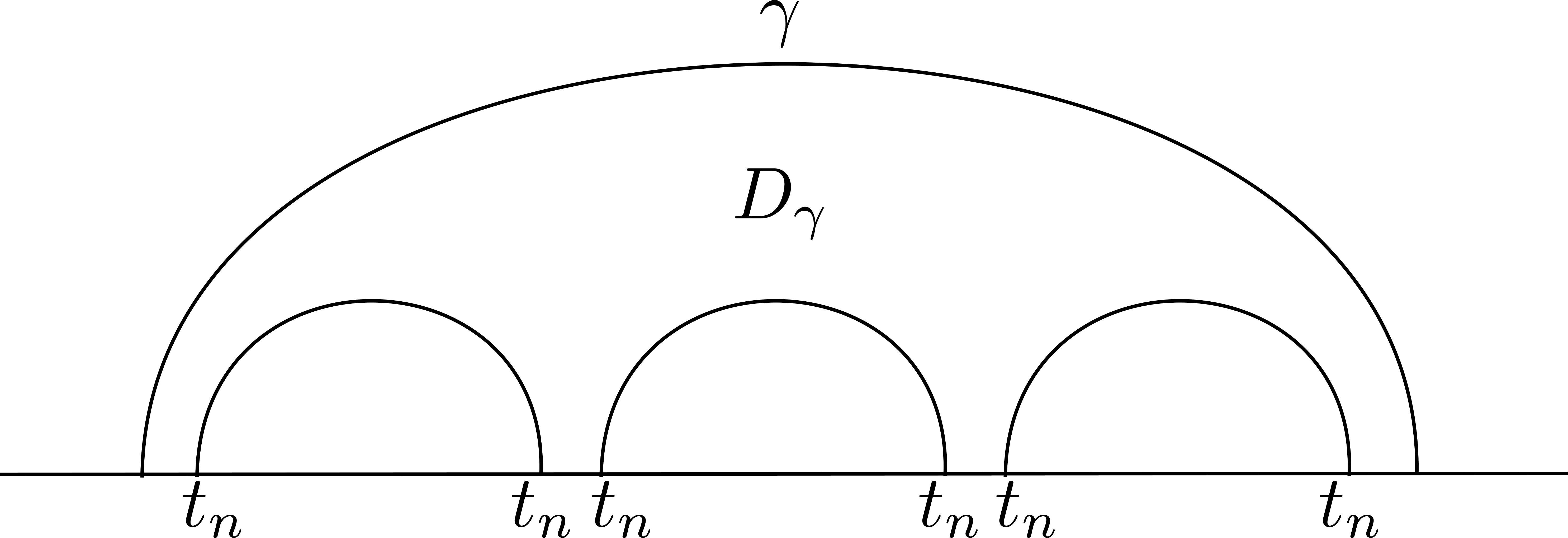}
	\qquad
	\includegraphics[scale=0.025]{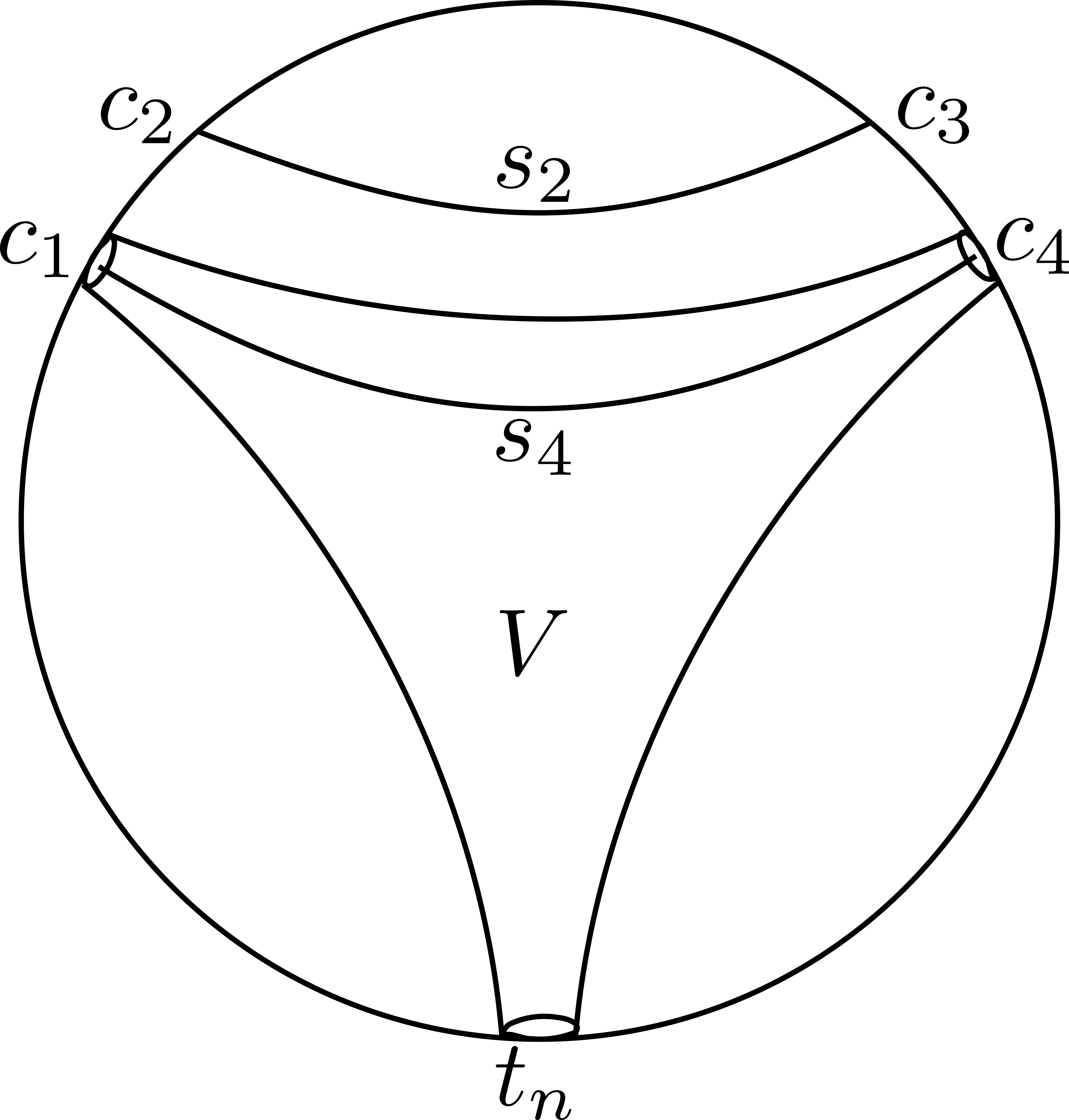}
	\caption{An outermost arc $\gamma$ of $E\cap S$ after the arcs $\alpha_n$ (left); the component $V$ cut from $G$ by $t_n\cup c_1\cup c_4$ (right).}	
	\label{figure:S2}
\end{figure}

\begin{lemma}[cf., Lemma 3.4 of \cite{Nogueira}]
\label{lemma:both_ends_in_c1_or_both_ends_in_c4}
The arc $\gamma$ has either both ends in $c_1$ or both ends in $c_4$. 
\end{lemma}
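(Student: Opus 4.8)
The plan is to run the standard "innermost/outermost" dichotomy on the arc $\gamma$, which by construction is an outermost arc of $E\cap S$ sitting (in $E$) beyond the whole parallel family $\alpha_1,\ldots,\alpha_n$, and to rule out every possibility for the pair $(\omega_1,\omega_2)$ of disks of $G\cap S$ at the ends of $\gamma$ except the two asserted ones. First I would record what the disks adjacent to the arcs $\alpha_n$ can be: since $\gamma$ lies outside $\alpha_n$ in $E$ and both ends of each $\alpha_i$ are at $t_i$, the arc $\gamma$ cannot have an end at any $t_i$ with $i\le n$ without contradicting that the $\alpha_i$'s exhaust the arcs with both ends at those $t$-disks in the relevant subdisk; more precisely, if $\gamma$ had an end at a $t$-disk then, since $\gamma$ is outermost, Lemma~\ref{claim:looped_disks} would force that end to be a loop at a $t$-disk, and an innermost such loop together with its $t$-disk would bound a disk in $S$ usable to reduce $|E\cap S|$, contradicting minimality. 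Hence $\gamma$ has no end at a $t$-disk at all, so both $\omega_1$ and $\omega_2$ are $c$-disks. By Case~2 we are in the configuration of Figure~\ref{figure:case1a}: the $c$-disks are $c_1,c_4$ on one side and $c_2,c_3$ on the other of the $t$-disk family, with the inside strings running $c_1$ to $c_4$ and $c_2$ to $c_3$, knotted.

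Next I would eliminate the "mixed" possibilities. If $\gamma$ joined $c_1$ (or $c_4$) to $c_2$ (or $c_3$), then the outermost disk $O$ bounded by $\gamma$ together with the subarc $\beta$ of $\partial G$ is disjoint from $S$ and its boundary crosses the $t$-disk region; tracking $\beta$ through $Q$ (the genus-2 piece) and across the parallel $t$-handles, one produces from $O$ and a subdisk of $S$ a disk or annulus in the exterior of $G$ that either is essential — giving an essential torus in the handlebody $H$, a contradiction — or is boundary-parallel and lets us isotope a $t$-handle to reduce $|\tau\cap S|$, contradicting minimality; this is the same mechanism as in the proof of Lemma~\ref{lemma:odd} and Lemma~\ref{lemma:same t-disk}. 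The case $\gamma$ joining $c_1$ to $c_4$ (the two $c$-disks connected through the \emph{inside} of $S$ by a knotted string) is excluded because then $O\cup(\text{the band }\Delta\subset S\text{ between }c_1,c_4)$ would be a disk in $H$ whose boundary is the knotted inside string of $L_1$ together with $\gamma$, forcing that string to be unknotted rel $S$, contradicting that the inside strings are knotted (equivalently, contradicting that $S$ is essential); similarly $\gamma$ joining $c_2$ to $c_3$ is excluded. That leaves exactly "both ends at $c_1$" or "both ends at $c_4$."

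The main obstacle I expect is the bookkeeping in the mixed case: one has to be careful about how $\beta$ passes through the parallel family of $t$-handles and through $Q$, and to argue that the surface built from $O$ and a piece of $S$ is incompressible (so that it is genuinely an essential torus rather than something destabilizable); the cited Lemma~3.4 of \cite{Nogueira} presumably handles exactly this normal-form-in-a-handlebody argument, so I would lean on it. A secondary point needing care is ensuring that when I cut and paste $E$ along disks in $S$ (to kill hypothetical $t$-disk ends of $\gamma$, or loops) I do not increase $|E\cap S|$ elsewhere and that the disk in $S$ I use is innermost among the relevant curves, so the reduction is genuine. Once these two technical points are in hand, the statement follows by the process of elimination above.
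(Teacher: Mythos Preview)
Your proposal has a genuine gap, stemming from a misreading of what $\gamma$ is. The arc $\gamma$ is \emph{not} a globally outermost arc of $E\cap S$: it is the first arc one meets after the nested family $\alpha_1,\ldots,\alpha_n$, so the subdisk $D_\gamma$ of $E$ cut off by $\gamma$ contains all of the $\alpha_i$'s and is certainly not disjoint from $S$. In particular, the two subarcs of $\partial E$ between $\alpha_n$ and $\gamma$ lie in the component $V$ of $G-S$ bounded by $t_n\cup c_1\cup c_4$; hence the ends of $\gamma$ are automatically in $\{t_n,c_1,c_4\}$. Your ``mixed'' cases $c_1\leftrightarrow c_2$, $c_4\leftrightarrow c_3$, and the case $c_2\leftrightarrow c_3$, simply cannot occur and need no argument.

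The real work, which your proposal sidesteps, is ruling out the possibilities that $\gamma$ has one or both ends at $t_n$. Your appeal to Lemma~\ref{claim:looped_disks} does not apply: $\gamma$ is not outermost among arcs with an end at $t_n$ (the $\alpha_n$ side already contains such arcs, and there is no control on the far side), and even if $\gamma$ were a loop at $t_n$, the disk it cuts from $S$ is full of $c$-disks, so no reduction of $|E\cap S|$ is available---indeed $\alpha_1$ is already such a loop and yields no reduction. The paper's argument is quite different: one forms the solid torus $T$ by gluing $B'$ to $V$ along $c_1\cup t_n$, observes that $\partial D_\gamma\subset\partial T$ must bound a disk $D_T$ in $\partial T$ (no lens space or $S^1\times S^2$ summand), and then analyzes the ball $R$ bounded by $D_\gamma\cup D_T$ together with the strings $s_2,s_4$ it contains, in each subcase producing an unknotted string in $B$ and hence a contradiction with the nontrivial pattern $K_0$. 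A separate isotopy argument handles the case of one end at $t_n$ and one at $c_1$. Your sketch for the $c_1\leftrightarrow c_4$ case is morally right (one does conclude $s_4$ is unknotted), but because $D_\gamma$ meets $S$ in the $\alpha_i$'s you first have to isotope those intersections off before the parallelism of $s_4$ to $\beta$ in $V$ gives the conclusion; this is not automatic from an ``outermost disk $O$.''
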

\begin{proof} Denote by $D_\gamma$ the disk cut by $\gamma$ and the arcs $\alpha_n$ and by $V$ the component cut from $G$ by $t_n\cup c_1\cup c_4$. By Lemma \ref{lemma:odd}, $n$ is odd, $V$ is in $B$. (See Figure \ref{figure:S2} (right)). Let $T$ be the solid torus defined by identifying $B'$ to $V$ along $c_1\cup t_n$. We also denote the arc of $B\cap L_1$ in $V$ by $s_4$, which has one end in $c_1$ and one end in $c_4$, and the other arc of $B\cap L_1$ by $s_2$, which has one end in $c_2$ and the other end in $c_3$. We will show that, if $\gamma$ has an end in $t_n$ or one end in $c_1$ and the other end in $c_4$, then one of the arcs $s_2$ or $s_4$ is unknotted in $B$, which contradicts it having a non-trivial knot pattern in $B$.
\medskip\\
Assume first that $\gamma$ has both ends in $t_n$. As $S^3$ has no lens space or $S^1\times S^2$ summands, we have that $\partial D_\gamma$ is inessential in $T$. Let $D_T$ denote the disk bounded by $\partial D_\gamma$ in $\partial T$. The disk $D_T$ intersects $L_1$ in at least two components. In fact, consider the arcs $D_T\cap \partial (c_1\cup t_n)$. Then, there are at least two outermost disks cut by these arcs in $D_T$. Let $O$ be one of these disks. In case $O$ is disjoint from $L_1$, then either it contains some disk $S\cap \tau$ and using an innermost loop of $E\cap S$ attached to these disks in we can reduce $|E\cap S|$, or the interior of $O$ is disjoint from $G$  and by cutting and pasting along $O$, or by an isotopy of this disk from $V$ into $S$, we can also reduce $|E\cap S|$, contradicting its minimality.\\
Let $R$ be the ball bounded by $D_\gamma\cup D_T$. Then, the ball $R$ contains either $s_2$, or a segment of $s_4$, or a union of these two.\\ 
Suppose $R$ contains the string $s_2$ only. As there are no local knots in the tangle $(B;B\cap L)$, the arc $s_2$ in $R$ is trivial. As both ends of $s_2$ are in $D_T$, then $s_2$ is parallel to $D_T$. Hence, as we can push $D_T$ to $S$ from $V$, we have that $s_2$ is unknotted in $B$.\\
Suppose now that $R$ contains also a segment of the string $s_4$. As $R$ intersects each component of $B\cap L$ at a single arc, and the exterior of $R$, denoted $R'$, contains the tangle $(B', B'\cap L)$, we have that $(R', R'\cap L)$ is essential. As $|\partial R\cap G|<|S\cap G|$ ($t_n$ is not in $D_T$), if the tangle $(R, R\cap L)$ is essential, we have a contradiction to the minimality of $|S\cap G|$. Therefore, and as there are no local knots, $(R, R\cap L)$ is a trivial tangle. Then, the string $s_2$ is unknotted in $R$ and, as before, also unknotted in $B$.\\
Finally, suppose that $R$ contains only a segment of the string $s_4$. As $D_T$ is disjoint from $c_1$, if we cut $V$ along $c_1$, we obtain a ball $B_V$ intersecting $s_2$ at a single component. As $B_V\cup R$ is $\partial$-parallel in $B$, the exterior of $B_V\cup R$ in $B$, denoted $B_R$, is a ball in $B$ intersecting $B\cap L$ at $s_2$ and a segment of $s_4$. As $|\partial B_R\cap G|<|S\cap G|$, following a similar reasoning as when $R$ contains two arcs, we also have that $s_2$ is unknotted in $B_R$. As $B_R\cup R$ is $\partial$-parallel in $B$, the 2-sphere $\partial B_R$ is isotopic to $S$ relative to $B_R\cap S$ in $B$. Then, $s_2$ is also unknotted in $B$.
\medskip\\
Assume now that $\gamma$ has one end in $t_n$ and, without loss of generality, the other end in $c_1$. We isotope $S$ in $S^3$ through $L$ along a regular neighborhood of a disk in $V$ which intersects $s_4$ once, intersects the disk $t_n$ along a single arc and separates $c_1$ and $c_4$ in $V$. Along this disk we also separate the disk $t_n$ into the disks $t_{n1}$ and $t_{n4}$, and $V$ into two 1-handles: $V_1$ connecting $c_1$ and $t_{n1}$ and $V_4$ connecting $c_4$ and $t_{n4}$. Let $S^*$ denote the sphere obtained after the isotopy of $S$. After the isotopy of $S$, the boundary of $D_\gamma$ lies in $S^*$, in $\partial V_1$ and in $\partial V_4$. The arcs of $\partial D_\gamma \cap V_4$ have both ends attached in $t_{n4}$. Hence, we can isotope these arcs to $S^*$. Also, all but one arc of $\partial D_\gamma \cap V_1$ has both ends in $t_{n1}$, with the other arc being $\gamma$ which has one end in $c_1$ and the other in $t_{n1}$. We isotope all arcs of $\partial D_\gamma \cap (V_1\cup V_4)$ with both ends in $t_{n1}$ or both ends in $t_{n4}$ into $S^*$. We are left with the disk $D_\gamma$ with boundary defined by one arc in $S^*$ and the other arc in $\partial V_1$ with one end in $c_1$ and the other end in $c_4$. Using this disk, we can isotope $t_{n1}$ through $S^*$. After this isotopy, we obtain a sphere $S^*$ in $B$ bounding a ball $B^*$ intersecting $B\cap L$ in the whole string $s_4$ and in a segment of $s_2$. We also have $|S^*\cap G|<|S\cap G|$, as $t_n$ was eliminated from the intersection. So, as before, $(B^*, B^*\cap L)$ cannot be essential, which, as there are no local knots, implies that the tangle is trivial. Then, in particular, $s_4$ is unknotted in $B^*$ and as $S^*$ intersects $S$ in a disk, we also have that $s_4$ is unknotted in $B$.
\medskip\\
Finally, assume that $\gamma$ has one end in $c_1$ and the other end in $c_4$. Then each arc of $D_\gamma\cap S -\gamma$ co-bounds a disk in $S- t_n$, with $\partial t_n$,  disjoint from $c_1\cup c_4$. Hence, we can isotope the arcs $\partial D_\gamma \cap S -\gamma$ in this disk. So, after the isotopy, abusing the notation, $\partial D_\gamma$ is defined by $\gamma$ and an arc in $S$ from $c_1$ to $c_4$. As $s_4$ is trivial in $V$, it is parallel to $\gamma$ in $V$. Therefore, $s_4$ is unknotted in $B$.
\end{proof}

By Lemma \ref{lemma:both_ends_in_c1_or_both_ends_in_c4}, each arc $\gamma$ has both ends in $c_1$ or both ends in $c_4$. Consider an outermost arc $\delta$ with an end in $c_2$ and the corresponding outermost disk $O$. If, in $O$, there is an arc $\gamma$ with both ends in $c_1$, then the consecutive intersections must be $c_2$'s, hence $\delta$ has both ends in $c_2$. Similarly, if $\gamma$ has both ends in $c_4$, we have an arc with both ends in $c_3$, and the consecutive intersections must be $c_2$, hence $\delta$ has again both ends in $c_2$. This contradicts Lemma \ref{claim:loopless_disks}.

\vspace{.4cm}
\noindent
\address{\textsc{Department of Mathematics,\\
Universidade Federal do Cear\'a}}\\

\vspace{.4cm}
\noindent
\address{\textsc{CMUC, Department of Mathematics,\\
University of Coimbra}}\\
\email{\textit{E-mail:}\texttt{
		nogueira@mat.uc.pt}}

\vspace{.4cm}
\noindent
\address{\textsc{CMUC, Department of Mathematics,\\
		University of Coimbra}}\\
\email{\textit{E-mail:}\texttt{
		ams@mat.uc.pt}}

\end{document}